\def\widebreve{\mathpalette\wide@breve}
\def\wide@breve#1#2{\sbox\mathbb{Z}@{$#1#2$}%
     \mathop{\vbox{\m@th\ialign{##\crcr
\kern0.08em\brevefill#1{0.8\wd\mathbb{Z}@}\crcr\noalign{\nointerlineskip}%
                    $\hss#1#2\hss$\crcr}}}\limits}
\def\brevefill#1#2{$\m@th\sbox\tw@{$#1($}%
  \hss\resizebox{#2}{\wd\tw@}{\rotatebox[origin=c]{90}{\upshape(}}\hss$}
\numberwithin{equation}{section}
\numberwithin{figure}{section}
\theoremstyle{plain}
\newtheorem{thm}{\protect\theoremname}[section]
\theoremstyle{plain}
\newenvironment{proof}[1][\protect\proofname]{\par
	\normalfont\topsep6\p@\@plus6\p@\relax
	\trivlist
	\itemindent\parindent
	\item[\hskip\labelsep\scshape #1]\ignorespaces
}{%
	\endtrivlist\@endpefalse
}
\providecommand{\proofname}{Proof}
\theoremstyle{remark}
\theoremstyle{plain}
\newtheorem{lem}[thm]{\protect\lemmaname}
\newcommand\1{\mathds{1}}
\providecommand{\corollaryname}{Corollary}
\providecommand{\lemmaname}{Lemma}
\providecommand{\remarkname}{Remark}
\providecommand{\theoremname}{Theorem}
\numberwithin{equation}{section}
\numberwithin{figure}{section}
\theoremstyle{plain}
\newtheorem{prop}[thm]{\protect Proposition}
\newtheorem{theorem}[thm]{Theorem}
\newtheorem{lemma}[thm]{Lemma}
\newtheorem{corollary}[thm]{Corollary}
\newtheorem{exmp}[thm]{Example}
\def\d1{{\textcolor{red} {d-1}}}
\def \bC {\mathbb C}
\def \bF {\mathbb F}
\def \bI {\mathbb I}
\def \bN {\mathbb N}
\def \bR {\mathbb R}
\def \bZ {\mathbb Z}
\def \bS {\mathbb S}
\def \bh {\mathbf h}
\def \bl {\boldsymbol \ell}
\def \bn {\mathbf n}
\def \br {\mathbf r}
\def \bv {\mathbf v}
\def \bx {\mathbf x}
\def \bzero {{\boldsymbol 0}}
\def \balp {{\boldsymbol{\alp}}}
\def \bxi {{\boldsymbol{\xi}}}
\def \fy {\mathfrak y}
\def \fz {\mathfrak z}
\def \le {\leqslant}
\def \leq {\leqslant}
\def \ge {\geqslant}
\def \geq {\geqslant}
\def \Tr {\mathrm{Tr}}
\def \d {{\mathrm{d}}}
\def \ds1 {\mathds{1}}
\def \alp {{\alpha}}
\def \del {{\delta}}
\def \eps {{\varepsilon}}
\def \kap {{\kappa}}
\def \lam {{\lambda}}
\newcommand{\Fr}[1]{\widehat{#1}}
\newcommand{\n}[1]{\|{#1}\|}
\providecommand{\lemmaname}{Lemma}
\providecommand{\theoremname}{Theorem}
\newcounter{@ToDo}
\newcommand{\todo@helper}[1]{%
	({\color{blue}TODO~\arabic{@ToDo}: {#1\@addpunct{.}}})%
}
\newcommand{\todo}[1]{\stepcounter{@ToDo}%
	\relax\ifmmode\text{\todo@helper{#1}}%
	\else\todo@helper{#1}\fi%
}
\newcounter{@cdo}
\newcommand{\cdo@helper}[1]{%
	({\color{red}CITE~\arabic{@cdo}: {#1\@addpunct{.}}})%
}
\newcommand{\cdo}[1]{\stepcounter{@cdo}%
	\relax\ifmmode\text{\cdo@helper{#1}}%
	\else\cdo@helper{#1}\fi%
}
\newcommand{\mmod}[1]{\,\,\mathrm{mod}\,\,#1}
\def \ds1 {\mathds{1}}
\def \alp {{\alpha}}
\def \del {{\delta}}
\def \eps {{\varepsilon}}
\def \epsilon {{\varepsilon}}
\def \kap {{\kappa}}
\def \lam {{\lambda}}
\begin{document}

\author{Sam Chow \and Zi Li Lim \and Akshat Mudgal}
\address{Mathematics Institute, Zeeman Building, University of Warwick, Coventry CV4 7AL, United Kingdom}
\email{sam.chow@warwick.ac.uk}
\address{Department of Mathematics, UCLA, Los Angeles, CA 90095, USA}
\email{zililim@math.ucla.edu}
\address{Mathematics Institute, Zeeman Building, University of Warwick, Coventry CV4 7AL, United Kingdom}
\email{akshat.mudgal@warwick.ac.uk}

\title[Dense Fermat over finite fields and rings]{Generalised Fermat equations in dense variables over finite fields and rings}

\subjclass[2020]{11B30 (primary); 11D41, 11D45, 11P05, 11T23 (secondary)}

\keywords{Arithmetic Ramsey theory, generalised Fermat equations, Bohr sets, character sums}

\begin{abstract}
Let $A$ be a sufficiently dense subset of a finite field $\mathbb F_q$ or a finite, cyclic ring $\bZ/N\bZ$. Assuming that $q$ and $N$ have no small prime divisors, we show that generalised Fermat equations have the expected number of solutions over $A$. We further show that our density threshold is optimal. 
Our proofs involve average Fourier decay for Bohr sets, mixed character sum bounds, equidistribution of polynomial sequences, popular Cauchy--Davenport lemmas, and a regularity-type lemma due to Semchankau. 
\end{abstract}

\maketitle

{\begin{center}
\footnotesize \em{Dedicated to Trevor Wooley on the occasion of his $(5^3 - 4^3 - 1^3)^{\mathrm{th}}$ birthday}
\end{center}}

\section{Introduction}

This paper studies solutions to Fermat equations over dense subsets of finite fields and cyclic rings. Such a problem seems to have been first studied by  Csikv\'{a}ri--Gyarmati--S\'{a}rk\"{o}zy \cite{CGS2012}, who considered solutions to the equation
\begin{equation} \label{sumsquare}
x + y = z^2
\end{equation}
with the variables lying in dense subsets of finite fields. They observed that the set 
\begin{equation} 
\label{densecount}
A = \{ x \in \mathbb{F}_p : x \in (0, p/4)  \ \text{and} \ x^2 \in (p/2,p) \},
\end{equation}
while satisfying $|A| \gg p$, does not exhibit any solutions to \eqref{sumsquare}. They further asked whether a colouring version of this result might exist, that is, for any partition $\mathbb{F}_p = C_1 \cup \dots \cup C_r$ with $r \in \mathbb{N}$, does there always exist some $1 \leq i \leq r$ such that $C_i$ has solutions to \eqref{sumsquare}? The latter question, along with its generalisation to more broader equations of the shape 
\begin{equation}
\label{gensumsquare}
x^{\alpha} +  y^{\beta} = z^{\gamma},
\end{equation} 
was resolved by Lindqvist \cite{Lin2018}. In fact, Lindqvist proved that for any $\alpha, \beta, \gamma \in \mathbb{N}$ and any partition $\mathbb{F}_p = C_1 \cup \dots \cup C_r$, with $r \in \mathbb{N}$,  there must exist some $1 \leq i \leq r$ such that $C_i$ has $\gg_{r, \alpha, \beta, \gamma} p^2$ many solutions to \eqref{gensumsquare}. This matches the total number of solutions to \eqref{gensumsquare} in $\mathbb{F}_p$, up to multiplicative constants. 
Such a phenomenon has been referred to as \emph{supersaturation},
by analogy with the extremal combinatorics parlance. We refer the reader to the pioneering works of Varnavides \cite{Va1959} and Frankl--Graham--R\"{o}dl~\cite{FGR1988}, who established these types of results for linear equations over integers. See also \cite{CC2025, CLP2021, Pre2021} and \cite{Lim2025} for supersaturation results for non-linear equations over the integers and finite, cyclic rings, respectively.

Returning to the set $A$ in \eqref{densecount}, one can prove using standard Fourier-analytic methods that $|A| \geq p/8 - o(p)$. A natural question then is whether there is any larger set that does not contain solutions to \eqref{sumsquare}. A straightforward consequence of the results in this paper is that this is roughly the best possible construction.

\begin{corollary} 
\label{denselin}
For every $\kappa >0$, every prime $p$ sufficiently large in terms of $\kappa$, and every set $A \subseteq \mathbb{F}_p$ satisfying $|A| \geq p(1/8 + \kappa)$, one has 
\[ \sum_{x,y, z \in A} \1_{x+y = z^2} \gg_{\kappa} p^2. \]
\end{corollary}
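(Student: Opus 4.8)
The plan is to obtain Corollary~\ref{denselin} as the case $(\alpha,\beta,\gamma)=(1,1,2)$ of the paper's general supersaturation theorem; I outline the argument directly for $x+y=z^2$, which already contains the essential difficulty. Write $\delta=|A|/p\ge 1/8+\kappa$, put $e_p(\theta)=e^{2\pi i\theta/p}$, and observe that the quantity to be bounded below is
\[
N(A)=\sum_{x,y,z\in A}\1_{x+y=z^2}=\sum_{z\in A}(\1_A*\1_A)(z^2),\qquad(\1_A*\1_A)(n)=\#\{(x,y)\in A^2:x+y=n\}.
\]
Expanding the constraint via additive characters gives $N(A)=p^{-1}\sum_{t\in\bF_p}\widehat{\1_A}(t)^2\,T(t)$, where $\widehat{\1_A}(t)=\sum_x\1_A(x)e_p(tx)$ and $T(t)=\sum_z\1_A(z)e_p(-tz^2)$. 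The frequency $t=0$ contributes the expected count $|A|^3/p=\delta^3p^2\ge(1/8+\kappa)^3p^2\gg_\kappa p^2$, so the entire problem is to prevent the off-origin frequencies from cancelling this; and this is genuinely delicate, since the set \eqref{densecount} shows that just below the threshold those frequencies conspire to cancel the main term almost completely.

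\emph{The linearly uniform case.} From the identities $\sum_t|T(t)|^2=p\,\#\{(z,z')\in A^2:z^2=z'^2\}\le 2p|A|$ and $\sum_t|\widehat{\1_A}(t)|^2=p|A|$, together with Cauchy--Schwarz, one gets
\[
\Bigl|\sum_{t\ne 0}\widehat{\1_A}(t)^2T(t)\Bigr|\le\Bigl(\max_{t\ne 0}|\widehat{\1_A}(t)|\Bigr)\Bigl(\sum_t|\widehat{\1_A}(t)|^2\Bigr)^{1/2}\Bigl(\sum_t|T(t)|^2\Bigr)^{1/2}\le\sqrt{2}\,p|A|\max_{t\ne 0}|\widehat{\1_A}(t)|.
\]
Hence if $\max_{t\ne 0}|\widehat{\1_A}(t)|\le\eta p$ with $\eta=\eta(\kappa)$ small enough, then $N(A)\ge\delta^3p^2-\sqrt{2}\,\eta p^2\gg_\kappa p^2$; here the bound on $\sum_t|T(t)|^2$ provides square-root cancellation, on average, for the quadratically twisted sums weighted by $\1_A$. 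Note that the extremal set \eqref{densecount} is \emph{not} linearly uniform: lying inside $(0,p/4)$, it satisfies $|\widehat{\1_A}(1)|\gg p$.

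\emph{The structured case.} Suppose instead that $|\widehat{\1_A}(t_0)|>\eta p$ for some $t_0\ne 0$. Here one must simultaneously control the additive structure of $A$ and the distribution of the values $z^2$, because the extremal set is confined both to an interval and, via the condition $z^2\in(p/2,p)$, to a quadratic Bohr set. I would invoke Semchankau's regularity-type lemma to pass to a bounded-complexity level-set (Bohr-set) decomposition of $\bF_p$ on each piece of which $A$ has essentially constant density, the uniform remainder being handled as in the previous case. Equidistribution of polynomial sequences over Bohr sets --- via Weyl-type bounds for quadratic exponential sums, average Fourier decay for Bohr sets, and (mixed) character sum bounds, and exploiting that squaring \emph{mixes} the additive level structure whereas $A+A$ respects it --- then controls how $\{z^2:z\in A\}$ is spread across the pieces, while a popular Cauchy--Davenport estimate yields $(\1_A*\1_A)(n)\gg_\kappa p$ for a positive proportion of $n$ in each relevant sumset piece. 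Finally the density hypothesis forces many hits: in the worst configuration $A$ is essentially a subset of an arithmetic progression $I$ with $|I|\ge|A|\ge p/8$ (and $|I|\gg p^{1/2+\varepsilon}$), so $A+A\subseteq I+I$ has $|I+I|\le 2|I|$ and, by equidistribution of $z^2$ over $I$, at most $\approx|I|(1-2|I|/p)\le p/8$ of the $z\in I$ can avoid $z^2\in I+I$; thus $\gg_\kappa p$ elements $z\in A$ have $z^2\in A+A$, and again by equidistribution $\gg_\kappa p$ of these land in the popular part of $A+A$, whence $N(A)\gg_\kappa p^2$. The identity $\max_{0<L<p}L(1-2L/p)=p/8$ is exactly what pins the constant to $1/8$, and higher-dimensional Bohr configurations turn out to carry strictly less solution-free mass.

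The crux, and the step I expect to be hardest, is this structured case: managing the joint structure of $A$ and of the quadratic values $z^2$, and keeping every error term --- the regularity error, the Bohr-set equidistribution error for $z^2$, and the popular Cauchy--Davenport loss --- small relative to the available slack $\kappa$. Obtaining the \emph{sharp} threshold $1/8$, rather than merely some fixed positive-density threshold, is precisely where the average Fourier decay for Bohr sets and the mixed character sum bounds become indispensable.
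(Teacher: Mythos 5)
Your opening line is, in fact, the paper's entire proof of Corollary~\ref{denselin}: one specialises Theorem~\ref{thm1} with $s=3$, $r=2$, $(i_1,i_2)=(1,2)$, $I_1=\{1,2\}$, $I_2=\{3\}$, $(c_1,c_2,c_3)=(1,1,-1)$ and $u=0$, so that the threshold is $\prod_j\frac{1}{rk_j}=\frac14\cdot\frac12=\frac18$ and the conclusion gives $\ge\eps(\kappa)\,p^2$ solutions. Had you stopped there (with the parameters checked), the proof would be complete and identical to the paper's.

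The direct argument you then sketch is really an outline of the proof of Theorem~\ref{thm1} itself, and while it follows the paper's strategy in broad strokes (Semchankau's wrapping lemma, a popular Cauchy--Davenport input, equidistribution of squares against Bohr sets), the structured case has concrete gaps. First, the wrapping lemma places $A$ (and the set $\{-z^2:z\in A\}$) inside \emph{wrappers} --- unions of boundedly many inhomogeneous Bohr sets of bounded rank --- not inside an arithmetic progression; the extremal set \eqref{densecount} is itself a rank-$2$ configuration cut out by conditions on both $x$ and $x^2$, so your reduction to ``$A$ essentially a subset of an AP $I$'' and the unproved assertion that higher-rank configurations carry strictly less solution-free mass cannot be the right endgame. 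The paper instead bounds the \emph{average} of $|W_{n_1}||W_{3}|/p^2$ over $n_1\in\{1,2\}$ by AM--GM subject to $|W_1|+|W_2|+|W_3|<(1+\kappa/10)p$, and this optimisation (not $\max_L L(1-2L/p)$, though it yields the same number) is what produces $1/8$. Second, the popular Cauchy--Davenport lemma (Lemma~\ref{PCD}) requires the densities to sum to more than $1$, so at density $1/8$ it does not give $(\1_A*\1_A)(n)\gg p$ on a positive proportion of $n$; the paper applies it to the three-fold sumset $W_1+W_2+W_3$, precisely to show that the wrapper densities cannot sum to $1+\kappa/10$ given that the wrappers inherit few solutions from $A$. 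Relatedly, the whole argument runs by contradiction --- assuming fewer than $\eps p^2$ solutions one deduces $|A|\le(1/8+3\kappa/5)p$ --- rather than by directly exhibiting popular representations, and your sketch conflates the two. Your ``linearly uniform'' computation is correct but is not actually a case distinction the paper makes.
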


In fact, our results in this paper provide optimal density thresholds along with supersaturation for a wider family of equations in broader algebraic settings. Writing $[n] = \{1,2,\dots, n\}$ for every $n \in \mathbb{N}$, our first main result over general finite fields is as follows.







\begin{thm} 
\label{thm1}
Let $r$ and $i_1 < \dots < i_r$ be positive integers, and let $s \ge 3$ be an integer. Let
$$
[s] = I_1 \cup \dots \cup I_r 
$$
be a partition of $[s]$ into discrete intervals $I_1, \dots, I_r$ such that $|I_j| = k_j$ for all $j$. Let $\kap > 0$, and let $\eps > 0$ be sufficiently small in terms of $s, i_r, \kappa$. Let $p$ be a prime sufficiently large in terms of $s, i_r, \kappa$, and let $q = p^m$ for some $m \in \mathbb{N}$. Let
$u \in \mathbb{F}_q$, let $c_1,\ldots,c_s \in \bF_q \setminus \{ 0 \}$, and let $A \subseteq \mathbb{F}_q$ with
\begin{equation}
\label{DensityAssumption}
|A| \geq \left(  \prod_{1 \le j \le r} \frac1{rk_j} + \kappa \right)q.
\end{equation}
Then there are at least $\eps q^{s-1}$ many solutions $\bx \in A^s$ to 
\begin{equation}
\label{MainEq}
\sum_{1 \leq j \leq r} \sum_{n \in I_j} c_n x_n^{i_j} = u.
\end{equation}
\end{thm}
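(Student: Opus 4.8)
The plan is to pass to a linear equation via the power maps, then run a structure-versus-randomness dichotomy. For $1 \le j \le r$ let $g_j\colon \mathbb{F}_q \to \mathbb{Z}_{\ge 0}$ be the pushforward of $\1_A$ under $x \mapsto x^{i_j}$, so $\|g_j\|_1 = |A|$ and $\|g_j\|_\infty \le i_j$; then the number $N$ of solutions of \eqref{MainEq} is the number of solutions of the linear equation $\sum_{1\le j\le r}\sum_{n\in I_j} c_n Y_n = u$ weighted by $\prod_j\prod_{n\in I_j} g_j(Y_n)$. Fixing a nontrivial additive character $\psi$ of $\mathbb{F}_q$ and writing $\widehat{g_j}(a) = \sum_{x\in A}\psi(ax^{i_j})$, orthogonality gives $N = q^{-1}\sum_{a\in\mathbb{F}_q}\psi(-au)\prod_j\prod_{n\in I_j}\widehat{g_j}(ac_n)$. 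The $a=0$ term contributes the main term $|A|^s/q \ge \big(\prod_j (rk_j)^{-1} + \kappa\big)^s q^{s-1} \gg_{s,\kappa} q^{s-1}$, so it suffices to control the rest.

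The remaining terms cannot be estimated crudely: the extremal construction --- roughly $A = \{x : x^{i_j}\in J_j\text{ for all }j\}$ with each $J_j$ an interval of density $(rk_j)^{-1}$, which by equidistribution has $|A| \approx q\prod_j(rk_j)^{-1}$ and admits no solution of \eqref{MainEq} --- makes the $a\ne 0$ contribution match the main term, with the obstruction carried by a bounded set of frequencies. To separate this, apply Semchankau's regularity lemma to the functions $g_1,\dots,g_r$ (splitting each into its at most $i_r$ level sets if one prefers a set formulation): since each $g_j$ has $L^\infty$ norm bounded in terms of $i_r$, its large spectrum is bounded, and one obtains a single Bohr set $B = B(\Gamma,\rho)$ with $|\Gamma| = O_{s,i_r,\kappa}(1)$, $\rho \gg_{s,i_r,\kappa} 1$, together with decompositions $g_j = g_j^\flat + g_j^\sharp$ where $g_j^\flat = g_j\ast\mu_B$ is nonnegative, $B$-almost-periodic and of the same mass as $g_j$, while $\|\widehat{g_j^\sharp}\|_\infty$ is as small as we wish. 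Establishing that $g_j^\sharp$ is genuinely Fourier-uniform --- equivalently, that $\widehat{g_j}(a) = \sum_{x\in A}\psi(ax^{i_j})$ is small whenever $a$ lies outside the Bohr structure --- is exactly where mixed character sum bounds are used, obtained by completing the sum over $A$ and invoking Weil-type estimates for $\sum_y\psi(ay^{i_j}+by)$.

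Expanding each of the $s$ factors of $N$ as $g_j^\flat + g_j^\sharp$, any term with at least one $g_j^\sharp$ factor is, after extracting that Fourier coefficient and bounding the rest by Cauchy--Schwarz and orthogonality, at most a quantity that we have arranged to be $o(q^{s-1})$, hence negligible for $p$ large. The all-$\flat$ term $N_\flat = \sum_{\sum c_n Y_n = u}\prod_{j,n}g_j^\flat(Y_n)$ is a weighted count of a non-degenerate linear equation (all $c_n\neq 0$) against $B$-measurable weights; since $B$ has bounded rank it behaves like a bounded-dimensional box, so one passes to the cosets of $B$ carrying appreciable $g_j^\flat$-mass and applies a popular Cauchy--Davenport lemma in each of the $r$ coordinate blocks. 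Here equidistribution of the polynomial sequences $x\mapsto x^{i_j}$ along translates of $B$ pins down the support of $g_j^\flat$ as a union of such translates of density controlled by $|A|/q$; a direct computation then shows that $|A|/q > \prod_j(rk_j)^{-1}$ is precisely the threshold at which the per-block sumsets are long enough to force the popular Cauchy--Davenport count to be positive, giving $N_\flat \gg_{s,i_r,\kappa}q^{s-1}$ on a positive-density family of configurations.

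The main obstacle is this all-$\flat$ case: one must marry quantitative equidistribution of $x\mapsto x^{i_j}$ on and relative to Bohr sets of bounded rank with a multidimensional popular Cauchy--Davenport estimate and extract exactly the product threshold $\prod_j(rk_j)^{-1}$, not something weaker --- it is this that makes Corollary~\ref{denselin} and its generalisations sharp, the matching lower bound coming from the construction above. A subsidiary point is the order of quantifiers: fix $\kappa$; choose the target uniformity for the $g_j^\sharp$, hence the rank and radius of $B$, in terms of $s, i_r, \kappa$; then fix $\eps$; and finally take $p$ large so that every equidistribution and character-sum error is $o(q^{s-1})$.
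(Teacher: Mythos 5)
Your overall architecture (structure-versus-uniformity, popular Cauchy--Davenport for the structured count, equidistribution of power maps relative to Bohr sets, and an AM--GM step to extract the product threshold) matches the paper's in its essential ingredients, though you run it as a direct Fourier-side $\flat/\sharp$ decomposition while the paper argues by contradiction, wrapping the image sets $A_n = \{c_n a^{i_j} : a \in A\}$ in Semchankau wrappers $W_n$ and deducing $\sum_n |W_n| < (1+\kappa/10)q$ from the popular Cauchy--Davenport lemma. The problem is that the decisive step --- the all-$\flat$ count, equivalently the passage from $|A|/q \ge \prod_j (rk_j)^{-1} + \kappa$ to ``the structured densities sum to more than $1+\kappa'$'' --- is exactly where you write ``a direct computation then shows'' and then flag it yourself as the main unresolved obstacle. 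Moreover the mechanism you describe for it is not right. Your claim that the support of $g_j^\flat$ has ``density controlled by $|A|/q$'' is false in the direction you need: the support of $g_j^\flat$ is roughly the $i_j$-th power image of $A$ smoothed by $B$, whose density is only bounded \emph{below} by $|A|/(i_j q)$ and can be anything up to $1$. The correct statement is an \emph{upper} bound on $|A|$: since $A$ is (up to a small exceptional set) contained in $\bigcap_j \{x : c_{n_j} x^{i_j} \in W_{n_j}\}$ for every choice of one index $n_j$ per block, \emph{joint} equidistribution of the tuple $(x^{i_1},\ldots,x^{i_r})$ --- a Deligne bound for $\sum_j \alpha_j x^{i_j}$ (nontrivial because the $i_j$ are distinct and less than $p$) combined with the $L^1$ Fourier decay of the structured sets --- forces $|A| \lesssim q \prod_j \theta_{n_j}$, and only then does averaging over $\bn$ plus AM--GM convert the product hypothesis into $\sum_n \theta_n > 1 + \kappa'$. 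Your per-block equidistribution of the individual maps $x \mapsto x^{i_j}$ controls each factor separately and cannot deliver this product bound; likewise, the popular Cauchy--Davenport lemma must be applied to the full $s$-fold convolution under the hypothesis $\sum_n \theta_n > 1+\kappa'$, not ``in each of the $r$ coordinate blocks''.

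Two smaller points. First, your route to Fourier uniformity of $g_j^\sharp$ via ``completing the sum over $A$ and invoking Weil'' does not work: completing over an arbitrary set $A$ costs $\|\widehat{\1_A}\|_1$, which can be of order $q$, wiping out the Weil saving. What actually makes the $\sharp$ terms tractable is that $\|g_j\|_\infty \le i_j$ forces the large spectrum of $g_j$ to have size $O_{\eta,i_r,\kappa}(1)$ by Parseval alone; Deligne/Weil enters in the equidistribution step above, not here. Second, for the $\sharp$ terms to be negligible you need the exceptional frequencies handled uniformly across the dilates $ac_n$, and the resulting $O(q^{s-1})\cdot o(1)$ bound must beat the main term, which is fine --- but this part is routine. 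The genuine gap is the product-threshold computation, which is the heart of the theorem and is asserted rather than proved.
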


\noindent
Corollary \ref{denselin} can be deduced directly from Theorem \ref{thm1} by setting 
\[
r=2,
\quad
(i_1, i_2) = (1,2),
\quad
[3] = \{1,2\} \cup \{3\},
\quad
u = 0.
\]

Another setting of interest is the case when $r=1$ and $i_1 = k$. This can be interpreted as a dense Waring-type problem in $\mathbb{F}_p$.

\begin{corollary} 
Let $s \geq 3$ and $k \ge 1$ be integers. Let $\kap > 0$, and let $\eps >0$ be sufficiently small in terms of $s,k, \kap$. Let $p$ be a prime sufficiently large in terms of $s,k, \kappa$, and let $q = p^m$ for some $m \in \mathbb{N}$. Let $u \in \bF_q$, let $c_1, \ldots, c_s \in \bF_q \setminus \{ 0 \}$, and let $A \subseteq \mathbb{F}_q$ with
\[ \frac{|A|}{q} \geq \frac{1}{s} + \kappa. \]
Then there are at least $\eps q^{s-1}$ many solutions $\bx \in A^s$ to 
\begin{equation}
\label{WaringEq}
c_1 x_1^k + \cdots + c_s x_s^k = u.
\end{equation}
\end{corollary}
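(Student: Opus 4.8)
The plan is to obtain this corollary as the special case $r = 1$ of Theorem~\ref{thm1}. Concretely, I would take the index set to consist of the single positive integer $i_1 = k$, and take the trivial partition of $[s]$ given by $I_1 = [s]$, so that $k_1 = |I_1| = s$. Under this choice the quantity $i_r$ appearing in Theorem~\ref{thm1} equals $k$, so the requirements there that $\eps$ be sufficiently small in terms of $s, i_r, \kappa$ and that $p$ be sufficiently large in terms of $s, i_r, \kappa$ reduce verbatim to the hypotheses of the corollary; the condition $s \ge 3$ is shared, and $k \ge 1$ is exactly the requirement that $i_1$ be a positive integer.

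Next I would check that the density thresholds agree. With $r = 1$ and $k_1 = s$, the product appearing in \eqref{DensityAssumption} is
\[
\prod_{1 \le j \le r} \frac{1}{r k_j} = \frac{1}{1 \cdot s} = \frac{1}{s},
\]
so \eqref{DensityAssumption} becomes $|A| \ge (1/s + \kappa) q$, which is precisely the density hypothesis here. Likewise, equation \eqref{MainEq} collapses to
\[
\sum_{n \in I_1} c_n x_n^{i_1} = c_1 x_1^k + \cdots + c_s x_s^k = u,
\]
which is \eqref{WaringEq}. Applying Theorem~\ref{thm1} then yields at least $\eps q^{s-1}$ solutions $\bx \in A^s$, which completes the deduction.

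I do not expect any genuine obstacle here: every hypothesis, and the conclusion, transfer directly under this specialisation, so the only work is the elementary bookkeeping above. All of the real content — including the fact that the constant $1/s$ is optimal, which should be witnessed by a construction in the spirit of \eqref{densecount} — is carried by Theorem~\ref{thm1} and the optimality discussion accompanying it.
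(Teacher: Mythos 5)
Your deduction is correct and is exactly how the paper obtains this corollary: it is the case $r=1$, $i_1=k$, $I_1=[s]$ (so $k_1=s$) of Theorem~\ref{thm1}, under which the density threshold $\prod_j 1/(rk_j)$ becomes $1/s$ and \eqref{MainEq} becomes \eqref{WaringEq}. Nothing further is needed.
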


Waring's problem over $\bN$ concerns representing all positive integers --- or all but finitely many --- as sums of at most $s$ many $k^{\mathrm{th}}$ powers of positive integers. The majority of interest has revolved around the search for the best possible dependence of $s$ on $k$, see \cite{BW2023}. More recently, there has been work which has focused on proving dense versions of Waring's problem, that is, given a sufficiently dense subset $\mathcal{A}$ of the $k^{\mathrm{th}}$ powers, one wants to represent all sufficiently large positive integers as a sum of at most $s$ many elements from $\mathcal{A}$, see \cite{Sal2021}. One can also replace the $k^{\textrm{th}}$ powers with other arithmetically interesting sets, see the very nice work of Shao \cite{Sha2014} which considers the analogous problem over dense subsets of primes, see also \cite{LP2010}. All the aforementioned works proceed via applying the transference principle from additive combinatorics \cite{Gr2005} to switch this problem to the $\mathbb{Z} /N \mathbb{Z}$ setting, for some appropriately chosen $N \in \mathbb{N}$. 

Our second main result is an optimal version of Theorem \ref{thm1} over finite, cyclic rings $\mathbb{Z}/N\mathbb{Z}$, for suitably rough $N$.

\begin{thm}  
\label{thm2}
Let $r$ and $i_1 < \dots < i_r$ be positive integers, and let $s \ge 3$ be an integer. Let
$
[s] = I_1 \cup \dots \cup I_r 
$
be a partition of $[s]$ into discrete intervals $I_1, \dots, I_r$ such that $|I_j| = k_j$ for all $j$. Let $\kap > 0$, let $\Omega \geq 1$, and let $\eps > 0$ be sufficiently small in terms of $s, i_r, \kappa, \Omega$. Then there exists a large constant $R$ depending only on $s, i_r, \kappa, \Omega$ such that the following is true.

Let $N=p_1^{m_1} \cdots p_t^{m_t}$ with 
\begin{equation}
\label{rough}
t, m_1,\dots, m_t \leq \Omega \quad \text{and} \quad p_1, \dots, p_m \geq R.
\end{equation}
Let
$u \in \bZ/N\bZ$, let $c_1,\ldots,c_s \in (\bZ/N\bZ)^{\times}$, and let $A \subseteq \bZ/N\bZ$ with
\begin{equation}
\label{density assumption mod N}
|A| \geq \left(  \prod_{1 \le j \le r} \frac1{rk_j} + \kappa \right) N.
\end{equation}
Then there are at least $\eps N^{s-1}$ many solutions $\bx \in A^s$ to \eqref{MainEq}.
\end{thm}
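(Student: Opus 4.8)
The plan is to deduce Theorem \ref{thm2} from Theorem \ref{thm1} by a reduction to prime power moduli via the Chinese Remainder Theorem, combined with a pigeonhole argument on residue classes and an appropriate ``lifting'' from $\bF_p$ to $\bZ/p^m\bZ$. First I would factor $\bZ/N\bZ \cong \prod_{\ell=1}^t \bZ/p_\ell^{m_\ell}\bZ$ and write $A$ through this isomorphism; the counting function for \eqref{MainEq} factors as a product over $\ell$, so it suffices to obtain, for each prime power modulus $p_\ell^{m_\ell}$, a set of density close to $\prod_j (r k_j)^{-1}$ on which \eqref{MainEq} has $\gg_{\kappa, \Omega} (p_\ell^{m_\ell})^{s-1}$ solutions. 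Since $t \le \Omega$ is bounded, a density $\ge \prod_j (rk_j)^{-1} + \kappa$ on $\bZ/N\bZ$ does not directly hand us density $\ge \prod_j(rk_j)^{-1} + \kappa$ in each coordinate, so the genuine content of the reduction is a pigeonholing step: one restricts $A$ to a product of short progressions (or a single fibre of the CRT projection onto all-but-one coordinate) to find a coordinate $\ell_0$ in which the slice has density at least $\prod_j (rk_j)^{-1} + \kappa'$ for some $\kappa' > 0$ depending on $\kappa$ and $\Omega$ only; this is where Semchankau's regularity-type lemma, advertised in the abstract, is presumably invoked to partition $\bZ/N\bZ$ into structured pieces on which $A$ is pseudorandom, so that a dense slice survives.

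Next I would handle the passage from $\bZ/p^m\bZ$ down to $\bF_p = \bZ/p\bZ$. Reduction mod $p$ sends $A \subseteq \bZ/p^m\bZ$ to some $\bar A \subseteq \bF_p$, and every fibre has size exactly $p^{m-1}$, so $|\bar A| \ge |A|/p^{m-1}$, i.e.\ $\bar A$ inherits the density of $A$. Applying Theorem \ref{thm1} with modulus $p$ (valid once $R$, hence $p \ge R$, is large enough in terms of $s, i_r, \kappa$) produces $\ge \eps_0 p^{s-1}$ solutions $\bar\bx \in \bar A^s$ to \eqref{MainEq} mod $p$. Each such solution must then be lifted to a solution in $A^s$ mod $p^m$: fixing preimages $x_n \in A$ of $\bar x_n$ for $n$ in all but one block, one solves the remaining single-variable congruence $c_{n_0} x_{n_0}^{i_j} \equiv u - (\cdots) \pmod {p^m}$ via Hensel's lemma, provided the relevant derivative $i_j x_{n_0}^{i_j - 1}$ is a unit mod $p$, i.e.\ $p \nmid i_j$ and $x_{n_0} \not\equiv 0$. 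Since $p \ge R$ and $i_j \le i_r$, the condition $p \nmid i_j$ is automatic; one must only discard the $O(p^{s-2})$ tuples with some coordinate $\equiv 0 \bmod p$, which is negligible. Counting preimages ($p^{m-1}$ choices per coordinate for the $s-1$ free coordinates, and a bounded number of Hensel lifts for the last), each mod-$p$ solution lifts to $\gg p^{(m-1)(s-1)}$ solutions mod $p^m$, giving the claimed $\gg_{\kappa,\Omega} (p^m)^{s-1}$ count. Multiplying these counts over the $t \le \Omega$ prime powers and tracking the $\eps$-dependence through yields the theorem.

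The main obstacle I anticipate is the pigeonholing/density-transfer step: on $\bZ/N\bZ$ with several prime factors, an arbitrary dense set need not be dense on any single CRT coordinate slice (e.g.\ $A$ could be a union of ``diagonal'' classes), so one genuinely needs the structural input --- Semchankau's lemma --- to locate a sub-configuration (a product of Bohr-type or progression-type pieces) on which the density is well distributed across coordinates, while only losing a bounded factor because $t, m_\ell \le \Omega$. Getting the quantitative dependence of $\kappa'$ and the final $\eps$ on $\kappa$ and $\Omega$ right, uniformly in $N$, is the delicate part; everything else (CRT factorisation, reduction mod $p$, Hensel lifting, discarding degenerate tuples) is routine. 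A secondary technical point is ensuring the Hensel lift stays inside $A$: one fixes the $s-1$ free coordinates to lie in $A$ and only needs the last coordinate's lift to land in $A$, which is why the argument is set up so that the last block has a free variable whose mod-$p$ reduction is prescribed but whose higher digits range over the full fibre --- one then intersects with $A$ and uses density again, or, more cleanly, runs Theorem \ref{thm1} already on $\bar A$ in a way that leaves one coordinate's fibre unconstrained. I would organise the write-up so that this last subtlety is absorbed into the choice of which variable to solve for.
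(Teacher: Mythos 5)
Your overall plan --- reduce to prime moduli by CRT and then lift --- is not the paper's route, and it has two gaps that I do not think can be repaired along the lines you sketch. First, the CRT factorisation of the solution count is only useful when the set itself factors: for a general dense $A \subseteq \bZ/N\bZ$, the number of solutions in $A^s$ does not split as a product over the prime-power components, and restricting $A$ to a single fibre of the projection onto all-but-one coordinate is fatal for the conclusion, since any solutions found inside one fibre number at most $(p_{\ell_0}^{m_{\ell_0}})^{s-1} = o(N^{s-1})$; moreover the fixed fibre imposes the equation modulo $N/p_{\ell_0}^{m_{\ell_0}}$ as a rigid constraint on the chosen base point. Your appeal to Semchankau's lemma to ``locate a dense slice'' also misreads its role: it is a wrapping statement (sets whose $s$-fold sumset misses a popular element are almost contained in unions of low-rank Bohr sets), not a regularity partition. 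Second, and independently, the Hensel-lifting step fails: after fixing $s-1$ coordinates in $A$, the last coordinate is forced to be one of $O_{i_r}(1)$ specific elements of $\bZ/p^m\bZ$, and nothing in the hypotheses places any of these in $A$ (the density of $A$ in a single fibre over $\bar x_{n_0}$ can be as small as $p^{1-m}$, i.e.\ one element, or the required lift may simply miss $A$). This is not a ``secondary technical point'' to be absorbed into the write-up; it is the reason one cannot deduce Theorem \ref{thm2} from Theorem \ref{thm1} by reduction mod $p$.

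For comparison, the paper never reduces to $\bF_p$. It runs the same contradiction argument as for Theorem \ref{thm1} directly in $\bZ/N\bZ$: assuming few solutions, it wraps the dilated power-sets $A_n = \{c_n a^{i_j}\}$ in $(\tau,d)$-wrappers via Theorem \ref{sem}, bounds $|W_1| + \cdots + |W_s|$ using the popular Cauchy--Davenport lemma, and then needs equidistribution of $(c_1x^{i_1},\ldots,c_rx^{i_r})$ in products of wrappers over $\bZ/N\bZ$ (Corollary \ref{equidistribution wrappers}). That equidistribution is where the CRT genuinely enters --- but only through the observation that a CRT slice of a Bohr set is again a Bohr set of the same rank, allowing an induction on the number of prime factors with base case $\bZ/p^m\bZ$ handled by the exponential sum bound of Lemma \ref{DeligneBoundAlt} (not by lifting from $\bF_p$). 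If you want to salvage your approach, the piece you are missing is precisely such a direct equidistribution statement modulo $p^m$ and modulo general rough $N$.
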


\noindent
Above and in the sequel, the $p_j$ are understood to be pairwise distinct primes.

As before, Theorem \ref{thm2} dispenses the following optimal dense Waring result in $\mathbb{Z}/N\mathbb{Z}$ for suitably rough numbers $N$.

\begin{corollary}  \label{warcor}
Let $s \geq 3$ and $k \ge 1$ be integers. Let $\kap > 0,$ let $ \Omega \geq 1$, and let $\eps >0$ be sufficiently small in terms of $s,k, \kap, \Omega$. Then, there exists a large constant $R$ depending only on $s, k, \kappa, \Omega$ such that the following is true.

Let $N=p_1^{m_1} \cdots p_t^{m_t} \in \bN$ with \eqref{rough}.
Let $c_1,\ldots,c_s \in (\bZ/N\bZ)^{\times}$, let
$u \in \bZ/N\bZ$, and let $A \subseteq \bZ/N\bZ$ with
\[ \frac{|A|}{N} \geq \frac{1}{s} + \kappa. \]
Then there are at least $\eps N^{s-1}$ many solutions $\bx \in A^s$ to \eqref{WaringEq}.
\end{corollary}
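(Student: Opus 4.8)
The plan is to obtain Corollary \ref{warcor} as the case $r = 1$ of Theorem \ref{thm2}. Given $s \geq 3$ and $k \geq 1$ as in the corollary, I would set $r = 1$, take $i_1 = k$, and choose the partition of $[s]$ to be the trivial one, namely $I_1 = [s]$, so that $k_1 = |I_1| = s$. With these choices one has $i_r = i_1 = k$, and
\[
\prod_{1 \le j \le r} \frac1{rk_j} = \frac1{1 \cdot s} = \frac1s,
\]
so the density hypothesis $|A|/N \ge 1/s + \kappa$ of the corollary is exactly the hypothesis \eqref{density assumption mod N} of Theorem \ref{thm2} for this partition. Likewise, since $i_r = k$, the quantifier structure matches: the $\eps > 0$ that is admissible in Theorem \ref{thm2} is permitted to depend on $s, i_r, \kappa, \Omega$, i.e.\ on $s, k, \kappa, \Omega$, and the constant $R = R(s, i_r, \kappa, \Omega) = R(s, k, \kappa, \Omega)$ supplied by Theorem \ref{thm2} is exactly the constant claimed in the corollary. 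The roughness condition \eqref{rough} on $N$ and the invertibility condition $c_1, \dots, c_s \in (\bZ/N\bZ)^{\times}$ carry over verbatim.

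It then remains only to observe that, under this specialisation, equation \eqref{MainEq} reads
\[
\sum_{n \in I_1} c_n x_n^{i_1} = c_1 x_1^k + \cdots + c_s x_s^k = u,
\]
which is precisely \eqref{WaringEq}. Hence Theorem \ref{thm2} furnishes at least $\eps N^{s-1}$ solutions $\bx \in A^s$ to \eqref{WaringEq}, as required.

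Since this is a direct specialisation, there is no genuine obstacle to overcome here; all of the difficulty resides in Theorem \ref{thm2} itself. The only point requiring (minimal) care is the bookkeeping of which parameters the implicit quantities $\eps$ and $R$ are allowed to depend on, together with the trivial check that passing from the general exponent list parameter $i_r$ to the single exponent $k$, and from a general partition to the trivial one, introduces no additional dependence and no loss in the density threshold.
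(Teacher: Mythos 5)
Your proposal is correct and is exactly the paper's intended derivation: Corollary \ref{warcor} is obtained by specialising Theorem \ref{thm2} to $r=1$, $i_1=k$, and the trivial partition $I_1=[s]$ (so $k_1=s$ and the density threshold becomes $1/s$). The parameter bookkeeping ($i_r=k$, hence $\eps$ and $R$ depend only on $s,k,\kappa,\Omega$) is also as the paper intends.
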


The only previously known result in this direction is due to the second author \cite{Lim2025}, who considered the setting of Corollary \ref{warcor} when $k =2$ and $s \geq 5$, but with more general choices of $N$. Hence, in comparison, our method gives optimal density results for higher powers with potentially distinct exponents in much fewer variables, although we are required to restrict to rough numbers $N$ with few prime factors, while the techniques of \cite{Lim2025} work for more general rough numbers $N$. In fact, the approach in \cite{Lim2025} is quite different from our current paper. The former relies on restriction estimates combined with the transference principle. We deploy regularity-type arguments of Semchankau~\cite{Sem2025}, together with a popular Cauchy--Davenport lemma and results on equidistribution of polynomial sequences in Bohr sets, see Proposition~\ref{equidistribution mod N}.

We emphasise that Theorems \ref{thm1} and \ref{thm2} deliver optimal density thresholds. In order to see this quickly, we consider the following example over $\mathbb{F}_p$.

\begin{exmp}
Given $\lambda_1, \dots, \lambda_r \in \bF_p \setminus \{0\}$, we set $c_n = \lambda_j$ for every $1 \leq j \leq r$ and every $n \in I_j$, and let
\[ A  = \bigg\{ x \in \bF_p : \lam_j x^{i_j} \in \Big( 0, \frac{p}{k_j r} \Big)  \ ( 1\leq j \leq r) \bigg\}. \]
One may apply Lemma  \ref{writ}, along with the fact that $\n{\Fr{\mathds{1}_{P}}}_1 \ll \log p$ for any arithmetic progression $P$ in $\bF_p$, to deduce that
\[ |A| = (1 + o(1)) p \prod_{1 \leq j \leq r} \frac{1}{k_j r}. \]
Moreover $A$ has no solutions to \eqref{MainEq} when $u = 0$, since for every $1 \leq j \leq r$ and every $\{a_n\}_{n \in I_j} \subseteq A$, one has
\[ \lambda_j \sum_{n \in I_j}  a_n^{i_j} \in  \Big( 0, \frac{p}{r} \Big) , \quad \text{whence} 
\quad \sum_{j=1}^r \sum_{n \in I_j} c_n a_n^{i_j} \in (0,p). \]
\end{exmp}

Furthermore, we note that it is necessary to have $N$ be at least somewhat rough in Theorem \ref{thm2} and Corollary \ref{warcor}, as 
evinced by the following example.

\begin{exmp}
Let $k\in \bN$, let $N=3N'$ where $N'$ is coprime to $3$, and let
\begin{equation*}
    A=\{x\in \bZ/N\bZ:x^k\equiv 1\text{ $\mathrm{mod}$ $3$}\}.
\end{equation*}
Then $A$ has large density, which is $1/3$ when $k$ is odd and $2/3$ when $k$ is even. However, owing to the congruence obstruction, it is impossible to represent all elements in $\bZ/N\bZ$ by $x_1^k + \cdots + x_s^k$ with $x_j\in A$ for all $1\leq j\leq s$, no matter how large $s$ is.
\end{exmp}



It would be interesting to study the problem over function fields. Equidistribution of polynomial sequences --- a key ingredient in our approach --- was recently established in \cite{LLW2025} and \cite{CGLLW}.

\bigskip

Let us finish our introduction with a brief description of the proof strategy.
To fix ideas, consider \eqref{WaringEq}. Assume for a contradiction that the iterated sumset $A_1 + \cdots + A_s$ does not contain $u$ with high multiplicity, where 
$
A_i = \{ c_i a^k: a \in A \}
.
$
Semchankau's wrapping lemma tells us that each $A_i$ is mostly contained in a \emph{wrapper} $W_i$, where a wrapper is a union of a small number of inhomogeneous, low-rank Bohr sets (it is important that we control the parameters). Moreover, it tells us that $W_1 + \dots + W_s$ does not fill the group with high multiplicity. Combining this with Green and Ruzsa's popular Cauchy--Davenport lemma allows us to deduce that the average density of $W_1, \dots, W_s$ is at most $1/s + o(1)$. Next, we establish strong average Fourier decay estimates for Bohr sets and hence for wrappers.  We infer from this --- and an extra slicing argument for moduli that are not prime powers --- effective equidistribution of $k^{\mathrm{th}}$ powers in a wrapper. This, along with the upper bound on the average density of the wrappers, gives us an upper bound on the density of $A$, delivering the desired contradiction.

\bigskip

\subsection*{Organisation} 
We record some preliminary definitions and results in \S2, including a popular version of the Cauchy--Davenport lemma along with various Weil-type bounds on exponential sums. In \S3, we record a variation of Semchankau's so-called Wrapping Lemma \cite{Sem2025}. We prove Theorem \ref{thm1} in \S4, and we prove some equidistribution results along with Theorem \ref{thm2} in \S5. 

\subsection*{Notation} We use Vinogradov notation, that is, we write $X \ll Y$ to mean there exists some absolute constant $C>0$ such that $|X| \leq C Y$. We denote $X \ll_z Y$ to mean that the above implicit constant $C$ may depend on $z$. We write $X = O_z(Y)$ to mean $X \ll_z Y$. For any 
set $X$ and any $k \in \mathbb{N}$, we write $X^k = \{(x_1, \dots, x_k) : x_1, \dots, x_k \in X\}$. Given $k \in \mathbb{N}$, we will use $\bv$ to denote the vector $(v_1, \dots, v_k) \in \mathbb{R}^k$. For a set or statement $E$, we write $\1_E$ for its indicator function.

\section{Preliminaries}

Let $G$ be a finite abelian group, and let $\widehat{G}$ be its group of characters. Given a function $f: G \to \mathbb{C}$, we define its Fourier transform $\widehat{f}: \widehat{G} \to \mathbb{C}$ as
\[ \widehat{f}(\gamma) = \sum_{x \in G} f(x) \gamma(x).   \]
By orthogonality, we get that for all $y \in G$, one has
\[ f(y) = |G|^{-1} \sum_{ \gamma \in \widehat{G}} \widehat{f}(\gamma) \overline{\gamma(y)}. \]
For any function $g : \widehat{G} \to \bC$ and any $q \in [1,\infty)$, we define
\[ \n{g}_q = \big( |G|^{-1} \sum_{\gamma \in \Fr{G}} |g(\gamma)|^q \big)^{1/q} \ \ \text{and} \ \ \n{g}_{\infty} = \max_{\gamma \in \Fr{G}}|g(\gamma)|.  \]
For any functions $f_1, f_2: G \to \mathbb{C}$ and $g_1, g_2: \Fr{G} \to \mathbb{C}$, we define their convolutions $f_1 * f_2: G \to \mathbb{C}$ and $g_1*g_2: \Fr{G} \to \mathbb{C}$ by
\[ (f_1 * f_2)(x) = \sum_{y \in G} f_1(y) f_2(x-y) 
\]
and
\[
(g_1 * g_2)(\gamma) = |G|^{-1}  \sum_{\gamma' \in \Fr{G}} g_1( \gamma') g_2( \gamma \gamma'^{-1}), 
\]
for all $x \in G$ and all $\gamma \in \Fr{G}$.

\subsection{Wrappers}
We now record the notion of wrappers. These were introduced and employed by Semchankau \cite{Sem2025} to analyse various sum--product-type problems in $\mathbb{F}_p$. Let $\tau \in (0,1)$ be a real number, and let
\begin{equation} \label{part1}
    \mathbb{S}^1  = S_1 \cup S_2 \cup \dots \cup S_r 
\end{equation} 
be a partition of $\mathbb{S}^1$ into arcs such that $\mu(S_1) = \dots = \mu(S_{r-1}) = \tau$ and $0 < \mu(S_r) \leq \tau$. Let  $\gamma_1, \dots, \gamma_d$ be pairwise distinct characters in $\widehat{G}$. For any $\bv = (v_1, \dots, v_d) \in [r]^d$, we define the inhomogeneous Bohr set
\begin{equation}
\label{Bohr}
B_{\bv} = B_{\bv, \gamma_1, \dots, \gamma_d, S_1, \dots, S_r} =  \{ x \in G:  \ \gamma_i(x) \in S_{v_i} \ ( 1 \leq i \leq d) \}. 
\end{equation}
For ease of notation, we omit the dependence of $B_\bv$ on the parameters $\gamma_1, \dots, \gamma_d, S_1, \dots, S_r$. Note that
\[ G = \cup_{\bv \in [r]^d} B_\bv.\]
Given any set $X \subseteq [r]^d$, we call the set $V = \cup_{\bv \in X} B_\bv$ a \emph{$(\tau, d)$-wrapper}. 

These are essentially the wrappers introduced by Semchankau \cite{Sem2025}, but our definition is very slightly more general.
We will use wrappers to contain --- i.e. wrap --- certain sets, up to a small number of elements. The advantage of using wrappers is that they have considerable average Fourier decay.

Given real numbers $0 \leq \ell_1 < \ell_2$ and any function $f: G \to [0, \infty)$, we define
\[ \Delta_f(\ell_1, \ell_2) = \{ x \in G : \ell_1 \leq f(x) \leq \ell_2\}. \]
The following is essentially \cite[Corollary 1]{Sem2025}, and can be deduced by combining elementary combinatorial arguments with a result of Croot--\L aba--Sisask \cite[Corollary 3.3]{CLS2013}.

\begin{lemma} 
\label{cor}
Let $f: G \to [0, \infty)$, let $\tau \in (0,1)$, and let $q > 2$ be a real number. Then there exists a positive integer $d \ll q/ \tau^2$, pairwise distinct, non-trivial characters $\gamma_1, \dots, \gamma_d \in \widehat{G}$, a partition of $\mathbb{S}^1$ as in \eqref{part1}, and a set $Z \subseteq G$ with $|Z| \leq e^{-q} |G|$ such that the following holds. For any real numbers $0 \leq \ell_1 < \ell_2$, there exists $X \subseteq [r]^d$ such that
    \[  \Delta_f(\ell_1 + \delta, \ell_2 - \delta) \setminus Z \ \subseteq \ (\cup_{\bv \in X} B_{\bv}) \setminus Z \ \subseteq \ \Delta_f( \ell_1 - \delta, \ell_2 + \delta) \setminus Z, \]
    where $\delta = 20 \n{\Fr{f}}_1 \tau$.
\end{lemma}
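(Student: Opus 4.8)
The plan is to reduce the statement to the result of Croot--Łaba--Sisask on level sets of functions with summable Fourier transform, and then upgrade their single level-set statement to the simultaneous structure encoded by a wrapper. First I would recall the precise form of \cite[Corollary 3.3]{CLS2013}: given $f : G \to [0,\infty)$ with $\widehat f \in \ell^1$, a threshold $\ell$, and a tolerance parameter, one obtains a bounded number of characters $\gamma_1,\dots,\gamma_d$ and an exceptional set $Z$ of small density such that membership of $x$ in the level set $\Delta_f(\ell-\delta,\ell+\delta)$, modulo $Z$, is controlled by the values $\gamma_1(x),\dots,\gamma_d(x)$ lying in a prescribed collection of arcs on $\mathbb{S}^1$. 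The key quantitative input is that the number of characters needed is $\ll q/\tau^2$ when one wants the exceptional set to have density $\le e^{-q}|G|$ and the arcs to have measure $\tau$; this is exactly the dependence claimed. I would make sure to extract a \emph{single} choice of characters $\gamma_1,\dots,\gamma_d$, a \emph{single} partition of $\mathbb{S}^1$ into arcs of measure $\tau$ (with a possibly shorter last arc), and a \emph{single} exceptional set $Z$, that simultaneously works for the two thresholds $\ell_1$ and $\ell_2$ — this is harmless since one may take the union of the two character sets and of the two exceptional sets, merge the two arc-partitions into a common refinement, and then pass to a coarsening of measure exactly $\tau$, at the cost of only a bounded blow-up in $d$ absorbed into the $\ll q/\tau^2$ bound.

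Next I would carry out the elementary combinatorial step that converts "control of each level set by arc-membership" into a wrapper. Fix the characters, the partition $\mathbb{S}^1 = S_1 \cup \dots \cup S_r$, and $Z$ as above. For $x \notin Z$, the value of $f(x)$ up to an error $\delta = 20\n{\Fr f}_1 \tau$ is determined by the tuple $\bv(x) = (v_1,\dots,v_d) \in [r]^d$ where $\gamma_i(x) \in S_{v_i}$; concretely, the Croot--Łaba--Sisask approximation shows that $f$ is within $\delta$ of a function that is constant on each $B_\bv$. Now define
\[
X = \{ \bv \in [r]^d : B_\bv \setminus Z \neq \emptyset \text{ and } f \text{ takes a value in } [\ell_1, \ell_2] \text{ on } B_\bv \setminus Z \},
\]
or more robustly, $X = \{\bv : \text{the constant value of the approximating function on } B_\bv \text{ lies in } [\ell_1 + \tfrac{\delta}{?}, \ell_2 - \tfrac{\delta}{?}]\}$; the precise defining inequality for $X$ has to be chosen so that both inclusions come out with the stated $\pm\delta$ margins. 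For the first inclusion: if $x \in \Delta_f(\ell_1+\delta, \ell_2-\delta) \setminus Z$, then the approximating value on $B_{\bv(x)}$ lies in $[\ell_1, \ell_2]$, so $\bv(x) \in X$ and $x \in (\cup_{\bv \in X} B_\bv) \setminus Z$. For the second inclusion: if $x \in (\cup_{\bv \in X} B_\bv)\setminus Z$, then $\bv(x) \in X$, so the approximating value on $B_{\bv(x)}$ lies in $[\ell_1,\ell_2]$, hence $f(x)$, being within $\delta$ of it, lies in $[\ell_1-\delta, \ell_2+\delta]$, i.e. $x \in \Delta_f(\ell_1-\delta,\ell_2+\delta)\setminus Z$. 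The arithmetic with the $\delta$'s is routine once the approximation statement is pinned down, and I would simply track constants to confirm $\delta = 20\n{\Fr f}_1\tau$ suffices.

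The main obstacle, and the step requiring the most care, is bookkeeping the passage from \cite[Corollary 3.3]{CLS2013} — which is stated for one level set and with its own normalisation of parameters — to the form needed here: extracting a uniform set of characters and a uniform partition of $\mathbb{S}^1$ valid for both thresholds simultaneously, verifying that the character count stays $\ll q/\tau^2$ after merging, confirming the exceptional set stays of size $\le e^{-q}|G|$ (possibly after shrinking $q$ by a bounded factor and re-absorbing), and matching the tolerance $\delta$ to $20\n{\Fr f}_1 \tau$ rather than whatever constant appears in \cite{CLS2013}. A secondary point to be careful about is that the characters should be non-trivial: the trivial character contributes no information to $\bv(x)$ (it puts every $x$ in the same arc), so it can simply be discarded from the list, and one should note that discarding it only coarsens the wrapper, preserving both inclusions. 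Once these normalisation issues are handled, the lemma follows.
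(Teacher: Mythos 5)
Your plan is correct and is precisely the derivation the paper intends: the paper gives no proof of Lemma \ref{cor}, presenting it as essentially \cite[Corollary 1]{Sem2025} and asserting it follows from \cite[Corollary 3.3]{CLS2013} together with exactly the combinatorial step you describe (the approximant varies by at most $O(\n{\Fr{f}}_1\tau)$ on each cell $B_{\bv}$ off $Z$, so defining $X$ by the values attained on $B_{\bv}\setminus Z$ yields both inclusions). One small simplification: there is no need to apply the CLS result separately to the two thresholds and merge character sets or partitions --- a single application fixes $\gamma_1,\dots,\gamma_d$, the partition of $\mathbb{S}^1$ and $Z$ once and for all, and only $X$ depends on the window $[\ell_1,\ell_2]$, which is exactly the quantifier structure of the lemma.
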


We remark that for certain choices of $\ell_1, \ell_2$, one could have $X = \emptyset$.

\subsection{A popular Cauchy--Davenport lemma}

The following is a general Pollard--Kneser-type result due to Green--Ruzsa \cite[Proposition 6.2]{GR2005}. 

\begin{lemma} 
Let $G$ be a finite abelian group, and let $H$ be the largest proper subgroup of $G$. Then, for every non-empty $A, B \subseteq G$ and every $1 \leq t \leq \min \{ |A|, |B|\}$,
\[ 
\sum_{x \in G} \min \{ t, \sum_{\substack{a \in A \\ b \in B}} \1_{a+b = x} \} \geq t \min \{|G|, |A| + |B| - t - |H| \}. 
\]
\end{lemma}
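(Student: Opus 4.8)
The plan is to reduce the inequality to a Kneser-type statement about the level sets of the representation function and then to treat two regimes separately. Write $r(x)=\sum_{a\in A,\, b\in B}\1_{a+b=x}$ and, for $j\ge 1$, put $C_j=\{x\in G: r(x)\ge j\}$, so that $C_1=A+B\supseteq C_2\supseteq\cdots$. Since $\min\{t,n\}=\sum_{j=1}^{t}\1_{n\ge j}$ for every non-negative integer $n$, the left-hand side equals $\sum_{j=1}^{t}|C_j|$, and writing $M=|A|+|B|-t-|H|$ the goal becomes $\sum_{j=1}^{t}|C_j|\ge t\min\{|G|,M\}$. First I would dispose of the degenerate ranges: if $M\le 0$ there is nothing to prove, and if $M>|G|$ then $r(x)=|A\cap(x-B)|\ge|A|+|B|-|G|\ge t+|H|+1>t$ for every $x$, so each $C_j$ with $j\le t$ is all of $G$ and the sum is exactly $t|G|$. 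So assume $0<M\le|G|$. The case $t=1$ is then the Cauchy--Davenport/Kneser bound: if $A+B\ne G$ its stabiliser $K$ is a proper subgroup, hence lies inside a maximal proper subgroup and $|K|\le|H|$, so Kneser's theorem gives $|A+B|\ge|A+K|+|B+K|-|K|\ge|A|+|B|-|H|\ge M$, while if $A+B=G$ then $|A+B|=|G|\ge M$.

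The inductive engine is a peeling step. Fix $b\in B$, set $B'=B\setminus\{b\}$, and write $m=r_{A+B'}(x)$; by a brief case analysis according to whether $m<t$ and whether $x-b\in A$, one finds the pointwise bound $\min\{t,r_{A+B}(x)\}\ge\min\{t-1,m\}+\1_A(x-b)$. Summing over $x$ gives $\sum_x\min\{t,r_{A+B}(x)\}\ge\sum_x\min\{t-1,r_{A+B'}(x)\}+|A|$, and symmetrically with $A$ and $B$ interchanged. Iterating $t-1$ times, each time removing an element from whichever of $A,B$ is the smaller --- legitimate since $t\le\min\{|A|,|B|\}$ --- reduces to the case $t=1$ and yields $\sum_x\min\{t,r(x)\}\ge\min\{|G|,M+1\}+(t-1)\max\{|A|,|B|\}\ge M+(t-1)\max\{|A|,|B|\}$. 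As $\max\{|A|,|B|\}\ge M$ exactly when $\min\{|A|,|B|\}\le t+|H|$, this settles that regime.

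The remaining regime $\min\{|A|,|B|\}>t+|H|$, with $0<M\le|G|$, is the crux: there the peeling bound loses too much and genuine additive structure is needed. The route I would take uses the identity $C_j=\bigcap_{B'\subseteq B,\ |B'|=|B|-j+1}(A+B')$ --- valid because $r(x)\ge j$ exactly when every $(|B|-j+1)$-element subset of $B$ meets $x-A$ --- combined with Kneser's theorem applied to the sumsets $A+B'$ and a pairing of the index $j$ with $t+1-j$, aiming at $|C_j|+|C_{t+1-j}|\ge\min\{2|G|,2M\}$, which on summing over the $t$ pairs gives $\sum_{j=1}^{t}|C_j|\ge t\min\{|G|,M\}$. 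I expect this Kneser step to be the main obstacle: one must control intersections of sumsets that are simultaneously Kneser-sharp while keeping the correction term equal to $|H|$ and not a varying stabiliser (the relevant point being that every proper subgroup of $G$ sits inside a maximal one, hence has order at most $|H|$), and one must arrange the pairing so that the levels $C_j$ which individually fall below $M$ are compensated by the larger early levels $|C_1|,|C_2|,\dots$. If one prefers not to reprove this, the alternative is simply to invoke the general Pollard--Kneser theorem at this point.
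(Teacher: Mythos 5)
Your peeling argument is sound as far as it goes: the pointwise inequality $\min\{t,r_{A+B}(x)\}\ge\min\{t-1,r_{A+B\setminus\{b\}}(x)\}+\1_A(x-b)$ is correct, and iterating it down to the $t=1$ case, which is Kneser's theorem plus the observation that every proper subgroup of $G$ lies in a maximal one and hence has order at most $|H|$, does settle the regime $\min\{|A|,|B|\}\le t+|H|$ (modulo the harmless slip that the base case delivers $\min\{|G|,M\}$ rather than $\min\{|G|,M+1\}$, which does not affect the conclusion). The gap is the complementary regime $\min\{|A|,|B|\}>t+|H|$, which you correctly identify as the crux and then do not prove. The pairing inequality $|C_j|+|C_{t+1-j}|\ge\min\{2|G|,2M\}$ is never established: you describe a route through the identity $C_j=\bigcap_{|B'|=|B|-j+1}(A+B')$ and Kneser's theorem, and then explicitly flag that controlling the stabilisers of these intersections is ``the main obstacle''. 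Nothing in the proposal overcomes that obstacle, and it is precisely where all the difficulty of the statement lives. Moreover, the offered fallback --- ``invoke the general Pollard--Kneser theorem'' --- is circular: the statement you are asked to prove \emph{is} the general Pollard--Kneser theorem for finite abelian groups. It is Proposition 6.2 of Green--Ruzsa, and the paper itself does not prove it but cites that reference; presenting the citation as the final step of a proof is not a proof.

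If you want to close the gap rather than cite, the pairing strategy is probably not the right engine: the classical treatments (Pollard's original argument and its extensions to abelian groups) proceed by induction, typically via a Dyson-type transform that preserves $|A|+|B|$ and does not decrease $\sum_x\min\{t,r(x)\}$, reducing either to a configuration covered by your peeling bound or to one where Kneser's theorem with an explicit stabiliser can be applied after quotienting. As written, your proposal establishes the lemma only when $\min\{|A|,|B|\}\le t+|H|$.
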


This can be used to establish the following popular Kneser-type inequality, see the derivation of 
\cite[Theorem 4.1]{Lim2025}.

\begin{lemma}
\label{PCD}
For each $s \in \mathbb{N}$ and each $\delta, \kappa >0$, there exist $c,M>0$ such that the following holds. Let $\theta_1, \dots, \theta_s > \del$ with 
\[ 
\theta_1 + \dots + \theta_s \geq 1 + \kappa.
\]
Let $G$ be a finite abelian group $G$ such that its largest proper subgroup $H$ satisfies $|H| \leq |G|/M$. Then, for any $x \in G$ and any $A_1, \dots, A_s \subseteq G$ satisfying $|A_i| \geq \theta_i |G|$ for all $1 \leq i \leq s$,
\[ 
\sum_{a_1 \in A_1, \dots, a_s \in A_s} \1_{a_1 + \dots + a_s = x} \geq c |G|^{s-1}. 
\]
\end{lemma}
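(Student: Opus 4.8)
\textbf{Proof plan for Lemma \ref{PCD}.}

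The plan is to bootstrap from the general Pollard--Kneser result (the preceding lemma) by an iteration on the number of summands. The key quantitative gain we need to exploit is the gap $\kappa$ in the sum of densities: at each stage of the iteration the truncation level will grow, and after finitely many steps it exceeds what the final Kneser bound can permit unless the full sumset $A_1 + \dots + A_s$ is already $c|G|^{s-1}$-popular at every point.

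First I would set up the induction. For $2 \le \ell \le s$, consider the $\ell$-fold representation function $f_\ell(x) = \sum_{a_1 \in A_1, \dots, a_\ell \in A_\ell} \1_{a_1 + \dots + a_\ell = x}$, so $f_\ell = \1_{A_1} * \dots * \1_{A_\ell}$. Note $\sum_{x \in G} f_\ell(x) = |A_1| \cdots |A_\ell| \ge (\theta_1 \cdots \theta_\ell)|G|^\ell$, and $f_\ell = f_{\ell-1} * \1_{A_\ell}$. The base case $\ell = 2$ is exactly the previous lemma: applied with truncation $t_2 = \lceil \eta_2 |G| \rceil$ for a suitable small $\eta_2 = \eta_2(\delta, \kappa)$, and using $|H| \le |G|/M$ with $M$ large, it yields that the set $U_2 = \{x : f_2(x) \ge t_2\}$ of ``$t_2$-popular'' points satisfies $|U_2| \ge \mu_2 |G|$ for some $\mu_2 > 0$ (indeed, if $f_2$ were below $t_2$ off a set of density less than $\theta_1 + \theta_2 - t_2/|G| - 1/M$, the truncated sum would be too small to match the trivial lower bound $t_2 \min\{|G|, (\theta_1 + \theta_2 - t_2/|G| - |H|/|G|)|G|\}$).

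For the inductive step, suppose we know $f_{\ell-1}(x) \ge t_{\ell-1}$ on a set $U_{\ell-1}$ of density $\ge \mu_{\ell-1}$. Apply the previous lemma to $A = U_{\ell-1}$ weighted — more precisely, bound $f_\ell = f_{\ell-1} * \1_{A_\ell} \ge t_{\ell-1} \, (\1_{U_{\ell-1}} * \1_{A_\ell})$ pointwise — and truncate the Cauchy--Davenport inequality for $\1_{U_{\ell-1}} * \1_{A_\ell}$ at level $t$: one gets that $\1_{U_{\ell-1}} * \1_{A_\ell} \ge t$ on a set of density $\ge \rho$, where $\rho$ and $t$ depend only on $\mu_{\ell-1}, \theta_\ell, \delta$ once $M$ is large enough that $|H|/|G|$ is negligible. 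Setting $t_\ell = t \cdot t_{\ell-1}$ and $\mu_\ell = \rho$ closes the loop. Carrying this out to $\ell = s$, and noting that $\mu_s |G| > |H|$ for $M$ large, the final application of the lemma at truncation level $t_s$ forces $f_s(x)$ to be bounded below on \emph{all} of $G$ (the largest-subgroup correction being the only escape, which we have excluded); tracking constants gives $f_s(x) \ge c|G|^{s-1}$ with $c = c(s,\delta,\kappa) > 0$, which is the claim. (Alternatively, and perhaps more cleanly, one localises the last step: once $f_{s-1}$ is $t_{s-1}$-popular on a positive-density set $U_{s-1}$, apply the previous lemma to $U_{s-1}$ and $A_s$ with truncation $t = |G|$, getting $\1_{U_{s-1}} * \1_{A_s} \ge \min\{|G|, (\mu_{s-1} + \theta_s - 1 - |H|/|G|)|G|\}$ at every point of a large set, and then handle the remaining points by a translate argument using that $U_{s-1}$ has positive density — but the iterated version above is the most robust.)

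The main obstacle I anticipate is making the truncation levels and densities propagate \emph{uniformly} through the $s$ steps: each application of the previous lemma costs a multiplicative factor in the popularity level and the set of popular points, so one must check that $\mu_\ell$ stays bounded away from $0$ and that the accumulated error from $|H| \le |G|/M$ can always be absorbed by choosing $M$ after (not before) fixing how small the various $\eta_\ell$ need to be. A secondary subtlety is that Green--Ruzsa's lemma is stated for unweighted sets $A, B$, whereas the natural iterand $f_{\ell-1}$ is weighted; the fix is to always peel off the weight by passing to a superlevel set $U_{\ell-1} = \{f_{\ell-1} \ge t_{\ell-1}\}$ and work with $\1_{U_{\ell-1}}$, at the cost of a factor $t_{\ell-1}$ that we simply carry along. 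None of the individual steps is hard; the bookkeeping of constants in the right order is where care is required, and this is presumably why the authors defer to the derivation of \cite[Theorem 4.1]{Lim2025}.
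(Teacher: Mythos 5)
Your overall strategy --- iterating the two-fold Green--Ruzsa inequality through superlevel sets of the partial convolutions $f_\ell = \1_{A_1} * \cdots * \1_{A_\ell}$ --- is the intended route (the paper itself gives no proof and defers to the derivation of Theorem 4.1 in \cite{Lim2025}, which runs along these lines). But as written your plan has a genuine gap in the endgame. The conclusion is a lower bound at \emph{every} $x \in G$, and the only mechanism by which the two-fold lemma can force that is the minimum being capped at $|G|$ in the last application, i.e.\ $|U_{s-1}| + |A_s| - t - |H| \ge |G|$, which forces $(\1_{U_{s-1}} * \1_{A_s})(x) \ge t$ for all $x$. Merely knowing that $U_{s-1}$ has \emph{some} positive density $\mu_{s-1}$ is not enough: if $\mu_{s-1} + \theta_s < 1$ then $U_{s-1} + A_s$ can genuinely miss points, and no ``translate argument'' recovers an everywhere-statement. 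So the induction must propagate densities \emph{additively}, $\mu_\ell \ge \theta_1 + \cdots + \theta_\ell - (\ell-1)(\eta + \lambda + 1/M)$; this is precisely where the hypothesis $\sum_i \theta_i \ge 1+\kappa$ enters, since it guarantees the cap occurs by stage $s$. Your inductive step only records that ``$\rho$ depends on $\mu_{\ell-1},\theta_\ell,\delta$,'' which discards exactly the information that makes the lemma true. (Your alternative final step ``with truncation $t=|G|$'' is also inadmissible, as the Green--Ruzsa lemma requires $t \le \min\{|A|,|B|\}$.)

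There is a second, smaller error in how you extract superlevel sets. From $\sum_x \min\{t, g(x)\} \ge tm$ one cannot conclude $|\{x: g(x) \ge t\}| \ge m - o(|G|)$: the function $g \equiv t-1$ has an empty superlevel set at level $t$ while its truncated sum is essentially maximal. One must lower the threshold: $\sum_x \min\{t,g\} \le t\,|\{g \ge \lambda t\}| + \lambda t |G|$, whence $|\{g \ge \lambda t\}| \ge m - \lambda |G|$. Your justification of the base case (``if $f_2$ were below $t_2$ off a set of density less than $\theta_1+\theta_2 - t_2/|G| - 1/M$ \ldots'') commits exactly this error. Both issues are repaired by running the induction on the quantitative statement
\[
\sum_{x \in G} \min\{T_\ell, f_\ell(x)\} \;\ge\; T_\ell \min\bigl\{|G|,\ |A_1|+\cdots+|A_\ell| - (\ell-1)(t+|H|) - (\ell-2)\lambda|G|\bigr\},
\]
with $t = \eta|G|$ and $T_\ell = t(\lambda t)^{\ell-2}$: pass from $f_{\ell-1}$ to $U = \{f_{\ell-1} \ge \lambda T_{\ell-1}\}$, use $f_\ell \ge \lambda T_{\ell-1}(\1_U * \1_{A_\ell})$, and apply Green--Ruzsa to $U, A_\ell$. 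Choosing $\eta, \lambda, 1/M$ small in terms of $\kappa/s$ and $\delta$ (the latter to keep $t \le \min\{|U|,|A_\ell|\}$), stage $s$ yields $f_s(x) \ge T_s = \eta^{s-1}\lambda^{s-2}|G|^{s-1}$ for all $x$, giving the claim with $c = \eta^{s-1}\lambda^{s-2}$.
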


\subsection{Deligne's bound}

The following estimate was demonstrated by Deligne. This formulation can be found in the introduction of \cite{Katz}, and is a special case of \cite[Theorem 8.4]{Deligne}.

\begin{lemma} 
\label{DeligneBound}
Let $k \in \bN$, and let $p$ be a prime such that $p \nmid k$. Let $\bF_q$ be a field of characteristic $p$, and let $\psi: \bF_q \to \bS^1$ be a non-trivial additive character. Let $P(x) \in \bF_q[x]$ be a polynomial of degree $k$. Then
\[
\left| \sum_{x \in \bF_q} \psi(P(x)) \right| \le (k-1) \sqrt q.
\]
\end{lemma}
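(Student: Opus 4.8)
The plan is to prove this as an instance of the Weil conjectures for a one-variable exponential sum: I would realise $S:=\sum_{x\in\bF_q}\psi(P(x))$ as minus the trace of the $q$-power Frobenius on an étale cohomology group of dimension $k-1$ all of whose eigenvalues have absolute value at most $\sqrt q$. First I would normalise the character. Fix once and for all a nontrivial additive character $\psi_0$ of $\bF_p$; then every nontrivial additive character of $\bF_q$ has the shape $y\mapsto\psi_0(\Tr_{\bF_q/\bF_p}(ay))$ for a unique $a\in\bF_q\setminus\{0\}$, and replacing $P$ by $aP$ (still of degree $k$ with $p\nmid k$, since $a\ne 0$) reduces us to the case $\psi=\psi_0\circ\Tr_{\bF_q/\bF_p}$.

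Next, attach to $\psi$ and $P$ the Artin--Schreier sheaf $\cL_\psi(P)$ on $\bbA^1_{\bF_q}$, the pullback along $P\colon\bbA^1\to\bbA^1$ of the lisse rank-one sheaf $\cL_\psi$. By the Grothendieck--Lefschetz trace formula,
\[
S=\sum_{i=0}^{2}(-1)^i\,\Tr\bigl(\mathrm{Frob}_q\mid H^i_c(\bbA^1_{\overline{\bF}_q},\cL_\psi(P))\bigr).
\]
Here $H^0_c=0$ since $\bbA^1$ is a smooth connected non-proper curve and $\cL_\psi(P)$ is lisse, and $H^2_c=0$ since $\cL_\psi(P)$ has no trivial quotient as a representation of $\pi_1(\bbA^1)$ — its monodromy at $\infty$ is nontrivial (wild, of break $k\ge 1$). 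Hence $S=-\Tr(\mathrm{Frob}_q\mid H^1_c(\bbA^1_{\overline{\bF}_q},\cL_\psi(P)))$. Equivalently, and more classically, one may count points on the Artin--Schreier curve $C\colon y^p-y=P(x)$: using that $y\mapsto y^p-y$ maps $\bF_q$ onto $\ker(\Tr_{\bF_q/\bF_p})$ with kernel $\bF_p$, one gets $\#C(\bF_q)=q+\sum_{t\in\bF_p^\times}S_t$ with $S_t=\sum_x\psi_0(\Tr(tP(x)))$ and $S_1=S$; the $\bF_q$-rational action $y\mapsto y+1$ of $\bZ/p\bZ$ commutes with Frobenius, so $H^1$ of the smooth model of $C$ splits into Frobenius-stable eigenspaces $V_t\cong H^1_c(\bbA^1,\cL_\psi(tP))$ with $S_t=-\Tr(\mathrm{Frob}_q\mid V_t)$.

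Then I would pin down the dimension and the weights. By the Grothendieck--Ogg--Shafarevich (Euler--Poincaré) formula applied to the lisse rank-one sheaf $\cL_\psi(P)$ on $\bbA^1\subset\bP^1$, using $\chi_c(\bbA^1)=1$,
\[
\dim H^1_c(\bbA^1_{\overline{\bF}_q},\cL_\psi(P))=-\chi_c=\mathrm{Swan}_\infty(\cL_\psi(P))-1,
\]
so everything reduces to the claim $\mathrm{Swan}_\infty(\cL_\psi(P))=k$. This is exactly where $p\nmid k$ enters: $P$ has a pole of order $k$ at $\infty$, and when that order is prime to $p$ the break of the Artin--Schreier sheaf there is non-degenerate and equals $k$ (if $p\mid k$ a change of variable can lower the break, and the bound with $k-1$ genuinely fails in general). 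Granting this, $\dim H^1_c=k-1$. Finally, $\cL_\psi(P)$ is lisse and pointwise pure of weight $0$, so by Deligne's purity theorem (the Weil~II estimates; in the curve formulation, the Riemann Hypothesis for $C$ applied eigenspace-by-eigenspace) every eigenvalue of $\mathrm{Frob}_q$ on $H^1_c$ has absolute value $\le q^{1/2}$. Combining,
\[
|S|=\bigl|\Tr(\mathrm{Frob}_q\mid H^1_c(\bbA^1_{\overline{\bF}_q},\cL_\psi(P)))\bigr|\le(\dim H^1_c)\,q^{1/2}=(k-1)\sqrt q.
\]

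The principal difficulty is simply that the statement is deep: it is equivalent to the Weil bound for the Artin--Schreier curve $y^p-y=P(x)$, so the substantive input — the Riemann Hypothesis over finite fields, or Weil~II — would be cited rather than reproved. Of the elementary bookkeeping, the step one cannot skip is the Swan-conductor computation $\mathrm{Swan}_\infty(\cL_\psi(P))=k$, which is where $p\nmid k$ is used (and which is false in general when $p\mid k$); a second, smaller point is checking $H^0_c=H^2_c=0$, so that the whole sum lands in $H^1_c$ with no main term. A Stepanov--Bombieri-style elementary argument applied directly to $C$ is also available, but by itself it only controls $\sum_{t\in\bF_p^\times}S_t$ rather than the individual sum $S$, so even there one must exploit the $\bF_q$-rational $\bZ/p\bZ$-action to separate the cohomology.
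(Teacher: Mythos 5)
The paper does not prove this lemma at all: it is quoted as a known result, with the formulation attributed to the introduction of Katz's paper and the substance to Deligne (\emph{La conjecture de Weil: I}, Th\'eor\`eme 8.4), so there is no in-paper argument to compare against. Your sketch is the standard cohomological proof underlying that citation, and it is correct as a proof outline: the reduction to $\psi=\psi_0\circ\Tr_{\bF_q/\bF_p}$ and $P\mapsto aP$ is fine; $H^0_c$ and $H^2_c$ vanish for the reasons you give (rank one, geometrically nontrivial monodromy); Grothendieck--Ogg--Shafarevich gives $\dim H^1_c=\mathrm{Swan}_\infty-1$; and you correctly isolate the one step where $p\nmid k$ is genuinely used, namely $\mathrm{Swan}_\infty(\cL_\psi(P))=k$ (pole order prime to $p$, so the break is exactly $k$ and cannot be lowered by an Artin--Schreier substitution). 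Two small comparative remarks. First, Deligne's Theorem 8.4 proves the $n$-variable bound $(k-1)^n q^{n/2}$ by relating the sum to the cohomology of a smooth projective hypersurface and invoking the Weil conjectures; your route through the Artin--Schreier sheaf on $\bbA^1$ is the one-variable specialisation and is, if anything, cleaner. Second, as you note, in dimension one the full force of Weil II is not needed: after splitting $H^1$ of the smooth complete model of $y^p-y=P(x)$ into the Frobenius-stable $\mu_p$-eigenspaces $V_t$, the Riemann Hypothesis for curves (Weil, or Stepanov--Bombieri) already bounds each eigenvalue by $\sqrt q$, and the genus computation $g=(p-1)(k-1)/2$ recovers $\dim V_t=k-1$ for each $t\ne 0$. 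Either way the deep input is cited rather than reproved, which is exactly what the paper itself does.
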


For the rings $\bZ/p^m\bZ$, there are also power-saving exponential sum bounds. The following estimate follows from \cite[Equation 7.9]{Vau1997} and Lemma \ref{DeligneBound}.

\begin{lemma}
\label{DeligneBoundAlt}
Let $k, m \in \bN$, and let $p$ be a prime such that $p > k$. Let $P(x) \in (\bZ/p^m\bZ) [x]$ be a polynomial of degree $k$. Then
\begin{equation*}
\sum_{x\in \bZ/p^m\bZ} e_{p^m}(P(x)) \ll_k p^{m-1/k}.
\end{equation*}
\end{lemma}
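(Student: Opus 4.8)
The plan is to induct on $m$, with the base case $m=1$ being exactly Deligne's bound. Write $S_{p^n}(Q):=\sum_{x\bmod p^n}e_{p^n}(Q(x))$. If $k=1$ then $S_{p^m}(P)$ is a complete linear exponential sum and hence vanishes, so I would assume $k\ge 2$. For $m=1$, the hypothesis $p>k$ forces $p\nmid k=\deg P$, so Lemma~\ref{DeligneBound} applies and gives $|S_p(P)|\le(k-1)\sqrt p$; since $k\ge 2$ we have $\tfrac12\le 1-\tfrac1k$, hence $\sqrt p\le p^{1-1/k}\le p^{m-1/k}$, which is the claim.

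For the inductive step $m\ge 2$, lift $P$ to $\bZ[x]$, say $P(x)=a_0+\sum_{j=1}^{k}a_jx^j$, and split into two cases according to whether the reduction $P'\bmod p\in\bF_p[x]$ is the zero polynomial. If $P'\bmod p\ne 0$, I would write each residue $x\bmod p^m$ uniquely as $x=y+p^{m-1}z$ with $0\le y<p^{m-1}$ and $0\le z<p$; Taylor expansion together with $2(m-1)\ge m$ gives $P(y+p^{m-1}z)\equiv P(y)+p^{m-1}zP'(y)\pmod{p^m}$, so summing over $z$ collapses the modulus to $p$ and forces $p\mid P'(y)$, leaving $S_{p^m}(P)=p\sum_{0\le y<p^{m-1},\,p\mid P'(y)}e_{p^m}(P(y))$. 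Since $P'\bmod p$ is a nonzero polynomial of degree at most $k-1$, the constraint $p\mid P'(y)$ pins $y\bmod p$ down to at most $k-1$ residue classes, so at most $(k-1)p^{m-2}$ values of $y$ contribute and $|S_{p^m}(P)|\le (k-1)p^{m-1}\le(k-1)p^{m-1/k}$. If instead $P'\bmod p=0$, then since $1\le j\le k<p$ this forces $p\mid a_j$ for every $j\ge 1$, so $P(x)=a_0+pP_1(x)$ where $P_1$ is a polynomial over $\bZ/p^{m-1}\bZ$ of degree exactly $k$ (its leading coefficient is $a_k/p$, which is divisible by $p$ no more than... — concretely, $a_k$ is divisible by $p$ but not by $p^m$). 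A change of variables gives $S_{p^m}(P)=p\,e_{p^m}(a_0)\,S_{p^{m-1}}(P_1)$, and the inductive hypothesis applied to $P_1$ (legitimate since $p>k=\deg P_1$) yields $|S_{p^m}(P)|\le C_k\,p\cdot p^{(m-1)-1/k}=C_kp^{m-1/k}$. Taking $C_k=k$ closes the induction.

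These reductions are a mild repackaging of the prime-power recursion in \cite[Equation 7.9]{Vau1997}, so one could alternatively quote that directly and feed in Lemma~\ref{DeligneBound}. The only step requiring genuine care is the second case of the inductive step: one must check that $p>k$ makes every non-constant coefficient of $P$ divisible by $p$ — so that a factor of $p$ can be extracted without dropping the degree — and that the resulting $P_1$ still has degree exactly $k$ over $\bZ/p^{m-1}\bZ$. Everything else reduces to routine manipulations of complete exponential sums, and I expect no serious obstacle; note in particular that the target exponent $m-1/k$ is far weaker than the classical $m(1-1/k)$, which is why the crude root count in the first case suffices.
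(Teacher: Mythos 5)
Your argument is correct and is essentially the paper's own route: the paper simply cites the prime-power recursion in \cite[Equation 7.9]{Vau1997} together with Lemma \ref{DeligneBound}, and your induction on $m$ (Deligne at $m=1$; for $m\ge 2$, either the collapse via $x=y+p^{m-1}z$ and the root count for $P'\bmod p$, or the extraction of a factor of $p$ when $P'\equiv 0\bmod p$, which is where $p>k$ is used) is exactly that recursion written out. The degree and coefficient checks you flag are handled correctly, so there is nothing to add.
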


\section{Semchankau's wrapping lemma}

Our aim in this section is to record the proof of the following variation of a result due to Semchankau \cite{Sem2025}.

\begin{theorem} \label{sem}
Let $\varepsilon, \delta \in (0,1]$, let $s \geq 3$ be an integer, and let $A_1, \dots, A_s \subseteq G$ satisfy $|A_i| \geq \delta |G|$ for every $1 \leq i \leq s$. Suppose there exists $a \in G$ such that
\[ 
\sum_{a_1 \in A_1, \dots, a_s \in A_s} \mathds{1}_{a_1 + \dots + a_s = a} < \varepsilon |G|^{s-1}. 
\]
Then, for every $1\leq i \leq s$, there exist $\tau_i >0$ and $d_i \in \mathbb{N}$, as well as $W_i, Y_i \subseteq G$ such that $W_i$ is a $(\tau_i, d_i)$-wrapper, 
  \[  \tau_i \gg \eps^{1/(2s)} \delta^{1/2} \eps^{\lceil \eps^{-2} \rceil} ,
  \qquad
  \ d_i \ll \tau_i^{-2} \log (10/\eps)      \]
  and
    \[ |Y_i| \ll \varepsilon^{1/(2s)}|G|,
    \qquad A_i \setminus Y_i \subseteq W_i .  \]
Moreover, there exists $b \in G$ such that
\[ \sum_{w_1 \in W_1, \dots, w_s \in W_s} \mathds{1}_{w_1 + \dots + w_s = b} \ll_s \delta^{-s} \varepsilon^{1/2} |G|^{s-1} .\]
\end{theorem}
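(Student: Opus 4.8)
The goal is Theorem \ref{sem}, the wrapping lemma. The hypothesis is that $A_1 + \dots + A_s$ fails to represent some $a$ with multiplicity $\ge \eps|G|^{s-1}$, and each $A_i$ has density $\ge \delta$. I want to produce, for each $i$, a wrapper $W_i$ containing $A_i$ up to a small exceptional set $Y_i$, with good control on the parameters $\tau_i, d_i$, and such that the $W_i$'s still don't fill $G$ with high multiplicity.

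\textbf{Step 1: Set up the level-set structure.} The rough multiplicity bound says the convolution $f := \1_{A_1} * \cdots * \1_{A_s}$ satisfies $f(a) < \eps|G|^{s-1}$. I would first apply Lemma \ref{cor} to a suitable nonnegative function --- the natural choice is $f$ itself, or perhaps each $\1_{A_i}$ --- to wrap the relevant level sets. Concretely, with $q \asymp \eps^{-2}$ (so that $e^{-q} \le \eps^{1/(2s)}$, controlling the size of the discarded set $Z$) and $\tau$ chosen below, Lemma \ref{cor} gives characters $\gamma_1, \dots, \gamma_d$ with $d \ll q/\tau^2 \ll \tau^{-2}\eps^{-2}$, a partition of $\bS^1$, and $Z$ with $|Z| \le e^{-q}|G| \le \eps^{1/(2s)}|G|$, such that every set of the form $\Delta_f(\ell_1+\delta', \ell_2-\delta')$ is sandwiched between wrappers up to $Z$, with error $\delta' = 20\|\Fr f\|_1 \tau$. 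The point is that $A_i$ decomposes according to the ``typical'' contribution it makes to $f$; the sets $A_i$ on which $\1_{A_i} * (\text{product of the rest})$ is small near $a$ can be absorbed into the exceptional $Y_i$, while the remainder sits inside a wrapper built from these $\gamma_j$'s.

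\textbf{Step 2: Choose $\tau_i$ and control $\|\Fr f\|_1$.} The key quantitative tension is that $\tau$ must be small enough that the sandwiching error $\delta' = 20\|\Fr f\|_1\tau$ doesn't swamp the level-set separation we need, yet the stated lower bound $\tau_i \gg \eps^{1/(2s)}\delta^{1/2}\eps^{\lceil\eps^{-2}\rceil}$ must hold. One bounds $\|\Fr{\1_{A_i}}\|_1 \le \|\Fr{\1_{A_i}}\|_2^{1/2}\cdot(\text{trivial})$ --- actually $\|\Fr{\1_{A_i}}\|_1 \le |G|$ crudely, or better $\|\Fr f\|_\infty \le |G|^{s-1}$ --- and feeds this into the choice of $\tau$. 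The factor $\eps^{\lceil\eps^{-2}\rceil}$ is exactly what one loses from iterating the Croot--\L aba--Sisask argument $\sim\eps^{-2}$ times inside Lemma \ref{cor} (each iteration potentially costs a factor of $\eps$), so $\tau$ inherits this. Then $d_i \ll \tau_i^{-2}\log(10/\eps)$ follows from $d \ll q/\tau^2$ with $q \asymp \log(10/\eps)$ if we instead run Lemma \ref{cor} with the milder parameter $q = \log(10/\eps)$ on the individual $\1_{A_i}$ --- I'd need to reconcile which $q$ gives which bound, likely running the lemma twice or bookkeeping carefully.

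\textbf{Step 3: Propagate to $W_1 + \dots + W_s$.} Once $A_i \setminus Y_i \subseteq W_i$ with $|Y_i| \ll \eps^{1/(2s)}|G|$, I need the wrapper sumset to inherit the non-filling property. Write $\1_{W_i} = \1_{A_i \setminus Y_i} + (\1_{W_i} - \1_{A_i\setminus Y_i})$ where the second term is nonnegative. Then $\1_{W_1}*\cdots*\1_{W_s}$ expands into $2^s$ terms; the main term is $\1_{A_1\setminus Y_1}*\cdots*\1_{A_s\setminus Y_s}$, which at $a$ is at most $f(a) < \eps|G|^{s-1}$; each other term involves at least one factor $(\1_{W_i} - \1_{A_i\setminus Y_i})$. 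Here is where I must use that $W_i$ is \emph{not much larger} than $A_i$: since $W_i \supseteq A_i \setminus Y_i$ but also (from the sandwiching in Lemma \ref{cor}, choosing $\ell_1, \ell_2$ appropriately) $W_i$ is contained in a slightly fattened level set, one gets $|W_i| \le |A_i| + o(|G|) + |Z| \le |A_i| + \eps^{1/(2s)}|G|$-ish, hence $|W_i \setminus (A_i\setminus Y_i)| \ll \eps^{1/(2s)}|G|$. Crucially, wrapping $f$ directly (not the $\1_{A_i}$) makes the sumset-of-wrappers statement almost automatic: $W_1 + \dots + W_s$ lands near $\{f \text{ small}\}$ by construction. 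A term with one ``thin'' factor of $\ell^1$-mass $\ll\eps^{1/(2s)}|G|$ and the rest of mass $\le |G|$ each contributes $\ll \eps^{1/(2s)}|G|^{s-1}$; since $\eps^{1/(2s)} \le \eps^{1/2}$ for $\eps \le 1$, and picking up the $\delta^{-s}$ from the crudest bookkeeping, this yields $\sum_{w_i \in W_i}\1_{w_1+\dots+w_s = b} \ll_s \delta^{-s}\eps^{1/2}|G|^{s-1}$ for $b = a$.

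\textbf{The main obstacle.} The delicate part is Step 2 --- getting the \emph{simultaneous} parameter control: $\tau_i$ bounded below, $d_i$ bounded above, $|Y_i|$ bounded above, \emph{and} the error $\delta' = 20\|\Fr f\|_1\tau_i$ small enough that the level-set sandwich actually separates ``$f(a)$ small'' from ``$f$ of size $\asymp |G|^{s-1}$''. The tower-type loss $\eps^{\lceil\eps^{-2}\rceil}$ must be tracked precisely through the Croot--\L aba--Sisask iteration underlying Lemma \ref{cor}, and the exceptional sets $Y_i$ (which collect both the Lemma \ref{cor} discard $Z$ and the $A_i$'s making atypically small contributions to $f$ near $a$) must be shown to have size $\ll \eps^{1/(2s)}|G|$ --- the exponent $1/(2s)$ rather than $1/2$ presumably comes from a pigeonhole/Markov step splitting the deficit $\eps$ among $s$ slots and then taking a square root. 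Verifying that these constraints are mutually compatible, and that one can choose $\tau_i$ in the stated range while still making everything work, is the technical heart.
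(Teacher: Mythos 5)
There is a genuine gap, and it sits exactly where you flag ``the technical heart'': the parameter constraints cannot be met by applying Lemma \ref{cor} to either of the two functions you propose. If you apply it to $\1_{A_i}$ directly, the sandwiching error is $20\n{\Fr{\1_{A_i}}}_1\tau$, and $\n{\Fr{\1_{A_i}}}_1$ is in general as large as $|A_i|^{1/2}\asymp |G|^{1/2}$; to keep the error below the level-set scale (which is $O(1)$, since $\1_{A_i}$ takes values in $\{0,1\}$) you would be forced to take $\tau\ll |G|^{-1/2}$, hence $d\gg |G|$, which destroys the bounds on $\tau_i$ and $d_i$. If instead you wrap $f=\1_{A_1}*\cdots*\1_{A_s}$, you get wrappers for level sets of the convolution, which gives no containment $A_i\setminus Y_i\subseteq W_i$ of the individual sets. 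The missing device in the paper is a smoothing step: one takes the large spectrum $\Gamma$ of $\1_{A_1}$ (of size $\le n=\lceil\eps^{-2}\rceil$), forms the Bohr set $B$ on $\Gamma$ of width $\sigma=\eps$ (so $|B|\ge\sigma^n|G|$ --- this, not an iteration inside Croot--\L aba--Sisask, is the source of the factor $\eps^{\lceil\eps^{-2}\rceil}$ in $\tau_i$), and applies Lemma \ref{cor} to $f_i=|B|^{-1}\1_{A_i}*\1_B$. The smoothing gives $\n{\Fr{f_i}}_1\le |A_i|^{1/2}|B|^{-1/2}=O_{\eps}(1)$, which is what makes a constant-order $\tau_i$ admissible, while a Fourier computation on and off $\Gamma$ shows the smoothed solution count is still $<\eps|G|^{s-1}+O_s(|G|^{s-1}(n^{-1/2}+\sigma))$. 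A translation argument (averaging over $b\in B$ to pick a good shift $b_i$) then converts ``$f_i$ is not small at $a+b_i$'' into $A_i\setminus Y_i\subseteq W_i:=V_i-b_i$.

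Your Step 3 also does not go through as written. The claim $|W_i\setminus(A_i\setminus Y_i)|\ll\eps^{1/(2s)}|G|$ is unsupported and in general false: the upper inclusion from Lemma \ref{cor} only forces $f_i\gtrsim\eta_i\asymp\alpha\delta$ on $W_i$ (off the discard set), so $|W_i|$ can exceed $|A_i|$ by a factor of order $(\alpha\delta)^{-1}$. The paper instead uses the pointwise domination $\1_{V_i}\le 2\eta_i^{-1}(f_i+\1_{Z_i})$ and expands the product, which is precisely where the $\delta^{-s}$ (and an $\alpha^{-s}$) in the final sumset bound comes from; the exponent $1/(2s)$ is then fixed by choosing $\alpha=\eps^{1/(2s)}$ so that $\alpha^{-s}\eps=\eps^{1/2}$, not by a pigeonhole over $s$ slots. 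So while your overall architecture (level-set wrapping, exceptional-set bookkeeping, propagation to the sumset) matches the paper's, the proposal as it stands lacks the Bohr-set smoothing that makes the quantitative claims true, and its mechanism for bounding the wrapper sumset relies on a false measure estimate.
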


We now present the proof of Theorem \ref{sem}. We may assume that $a = 0$ by translating $A_1$ appropriately. Let $n \geq 2$ be an integer to be chosen later, and let 
   \[ \Gamma = \{ \gamma \in \Fr{G} : |\Fr{\1_{A_1}}(\gamma)| > |G|/n^{1/2} \}. \]
   Since $s \geq 3$, 
   \begin{align} \label{wgon}
  & |G|^{-1}  \sum_{\gamma \in \Fr{G} \setminus \Gamma}     |\Fr{\1_{A_1}}(\gamma) \cdots \Fr{\1_{A_s}}(\gamma) | \leq |G|^{s-4}\sum_{\gamma \in \Fr{G} \setminus \Gamma} |\Fr{\1_{A_1}}(\gamma) \Fr{\1_{A_2}}(\gamma) \Fr{\1_{A_3}}(\gamma) |  \nonumber \\
   & \leq |G|^{s-4} (\sum_{\gamma \in \Fr{G}} |\Fr{\1_{A_2}}(\gamma)|^2 )^{1/2} (\sum_{\gamma \in \Fr{G}} |\Fr{\1_{A_3}}(\gamma)|^2 )^{1/2} \max_{\gamma \in \Fr{G} \setminus \Gamma} |\Fr{\1_{A_1}}(\gamma)| \nonumber \\
   & \leq |G|^{s-1}/n^{1/2}. 
   \end{align}
   By orthogonality, one has
   \[ \# \Gamma |G|^2/n \leq \sum_{\gamma \in \Fr{G}} |\Fr{\1_{A_1}}(\gamma)|^2 = |G| \# A_1 \leq |G|^2,\]
   whence $\# \Gamma \leq n$.

Let $\sigma \in (0,1)$ be a parameter to be chosen later, let $S_{\sigma}$ be an arc of length $2\sigma$ centred at $1 \in \bS^1$, and let $B$ be the Bohr set
   \[ B = \{ x \in G   :  \gamma(x) \in S_{\sigma} \ \text{for all} \ \gamma \in \Gamma \}. \]
The Bohr set $B$ has rank $|\Gamma| \leq n$, and so
   \begin{equation} \label{bohrsize}
       |B| \geq \sigma^{|\Gamma|} |G| \geq \sigma^n |G|,
   \end{equation}
   see \cite[Lemma 4.20]{TV2006}. Moreover, for any $\gamma \in \Gamma$, 
   \begin{equation} \label{bohrfoureq}
   \Fr{\1_B}(\gamma) = \sum_{b \in B} \gamma(b) =  |B| ( 1 + O(\sigma  )).  
   \end{equation}

Now for every $1 \leq i \leq s$, define 
\[ f_i(x) = |B|^{-1} (\1_{A_i} * \1_B )(x)  = |B|^{-1} \sum_{y \in G} \1_{A_i}(y) \1_B(x-y) \]
for all $x \in G$. These functions approximate the additive structure of $\1_{A_i}$ on the physical side while having smoother Fourier $L^1$ norm, see \eqref{pst} and \eqref{l1four} respectively.
We first prove the former statement, and so, note that
\[ \Fr{f_i}(\gamma) = |B|^{-1} \Fr{\1_{A_i}}(\gamma) \Fr{\1_B}(\gamma) .\]
This, in turn, implies that $|\Fr{f_i}(\gamma)| \leq |G|/n^{1/2}$ for all $\gamma \notin \Gamma,$ while $\Fr{f_i}(\gamma) = \Fr{\1_{A_i}}(\gamma)( 1+ O(\sigma))$  for all $\gamma \in \Gamma$, with the latter equality following from \eqref{bohrfoureq}.  Furthermore,  \eqref{wgon} implies that
\begin{align*}
|G|^{-1} \sum_{\gamma \notin \Gamma} | \Fr{f_1}(\gamma) \cdots \Fr{f_s}(\gamma)  |  &\leq |G|^{-1} \sum_{\gamma \notin \Gamma}  |\Fr{\1_{A_1}}(\gamma) \dots \Fr{\1_{A_s}}(\gamma) | \\&
\leq |G|^{s-1}/n^{1/2}. 
\end{align*}
Consequently,
\begin{align}  
\label{pst}
&\sum_{x_1, \dots, x_s} f_1(x_1) \cdots f_s(x_s) \1_{x_1 + \dots + x_s = 0} 
= |G|^{-1} \sum_{\gamma}  \Fr{f_1}(\gamma) \cdots \Fr{f_s}(\gamma) \nonumber \\
& = |G|^{-1} \sum_{\gamma \in \Gamma} \Fr{\1_{A_1}}(\gamma) \cdots \Fr{\1_{A_s}}(\gamma) ( 1+ O(\sigma))^s + O(|G|^{s-1}/n^{1/2}) \nonumber \\
& = |G|^{-1} \sum_{\gamma \in \Gamma} \Fr{\1_{A_1}}(\gamma) \cdots \Fr{\1_{A_s}}(\gamma) + O_s(|G|^{s-1} ( n^{-1/2} + \sigma)) \nonumber \\
&  = \sum_{x_1, \dots, x_s} \1_{A_1}(x_1) \cdots \1_{A_s}(x_s) \1_{x_1 + \dots + x_s = 0} + O_s(|G|^{s-1} ( n^{-1/2} + \sigma)) \nonumber \\
& < \eps |G|^{s-1} + O_s( |G|^{s-1} n^{-1/2} + \sigma)),
\end{align}
where the penultimate step follows from \eqref{wgon} and orthogonality. As for the Fourier properties of $f_i$, note by a straightforward application of the Cauchy--Schwarz inequality and orthogonality that
\begin{equation} \label{l1four}
\n{ \Fr{f_i}}_1 = |G|^{-1}|B|^{-1} \sum_{\gamma \in \Fr{G}} | \Fr{\1_{A_i}}(\gamma)  \Fr{\1_{B}}(\gamma) |  \leq |A_i|^{1/2} |B|^{-1/2}.
\end{equation}

Now let $q>2$ and $\alpha \in (0,1)$ be parameters and, for every $1 \leq i \leq s$, let
\begin{equation}
\label{choice}
\eta_i = \frac{\alpha |A_i|}{20|G|} \quad \text{and} \quad \tau_i = \frac{\alpha |A_i|^{1/2}|B|^{1/2}}{800|G|}  
\end{equation}
be reals lying in $(0,1)$. For each $1 \leq i \leq s$, we apply Lemma~\ref{cor} to obtain a positive integer $d_i \ll q / \tau_i^2$ and a set $Z_i \subseteq G$ with $|Z_i| \leq e^{-q} |G|$ such that, upon setting $\ell_1 = \eta_i$ and $\ell_2 = 1 + 40 \n{\Fr{f_i}}_1 \tau_i$, we obtain a $(\tau_i, d_i)$-wrapper $V_i$ such that
\begin{align} \label{magma2}
\Delta_{f_i}(\eta_i + 20 \n{\Fr{f_i}}_1 \tau_i, & \  1 + 20 \n{\Fr{f_i}}_1 \tau_i) \setminus Z_i \ \subseteq \ V_i \setminus Z_i \nonumber \\
& \subseteq \ \Delta_{f_i}( \eta_i - 20 \n{\Fr{f_i}}_1 \tau_i, \ 1 + 60 \n{\Fr{f_i}}_1 \tau_i) \setminus Z_i.
\end{align}
Let us also record some useful properties that we will need later. By \eqref{l1four},
\begin{equation} \label{magma}
20 \n{\Fr{f_i}}_1 \tau_i \leq \eta_i/2. 
\end{equation}
This, along with our choice of $\eta_i, \tau_i$, implies that
\begin{align} \label{ltruse}
\sum_{x \in G: f_i(x) < \eta_i + 20 \n{\Fr{f_i}}_1 \tau_i } f_i(x) 
& < (\eta_i + 20 \n{\Fr{f_i}}_1 \tau_i  )|G| < 2 \eta_i |G|  \nonumber \\
& = \alpha |A_i|/10 < |A_i| = \sum_{x \in G} f_i(x),
\end{align}
whence $\Delta_{f_i}( \eta_i + 20 \n{\Fr{f_i}}_1 \tau_i,   1) \neq \emptyset$. Finally, the second inclusion in \eqref{magma2} combines with \eqref{magma} to furnish
\begin{equation} \label{grd}
\1_{V_i} \leq  \frac{f_i}{\eta_i - 20 \n{\Fr{f}}_1 \tau_i} + \1_{Z_i} \leq \frac{2f_i}{\eta_i} + \1_{Z_i} \leq 2\eta_i^{-1} ( f_i + \1_{Z_i}).
\end{equation}

We now establish some desirable additive properties of $V_1, \dots, V_s$. Inequality \eqref{grd} gives us
\begin{align*}
   &  \sum_{v_1 \in V_1 ,\dots, v_s \in V_s}  \1_{v_1 + \dots + v_s = 0} 
    \\ &\leq \sum_{x_1, \dots, x_s \in G} \prod_{i=1}^s  2 \eta_i^{-1} (f_i(x_i) + \1_{Z_i}(x_i) ) \1_{x_1 + \dots + x_s = 0}  \\
   &\ll_s (\eta_1 \cdots \eta_s)^{-1} \left( \sum_{x_1, \dots, x_s \in G} f_1(x_1) \cdots f_s(x_s)  \1_{x_1 + \dots + x_s = 0} + e^{-q} |G|^{s-1}  \right) .
\end{align*}
The assumption that $|A_i| \ge \del |G|$ along with the definition of $\eta_i$ in \eqref{choice} imply that $\eta_i \gg \alpha \delta$ for every $1 \leq i \leq s$. This in turn combines with the preceding inequality and \eqref{pst} to deliver the estimate
\[    \sum_{v_1 \in V_1 ,\dots, v_s \in V_s}  \1_{v_1 + \dots + v_s = 0}  \ll_s \alpha^{-s} \delta^{-s} |G|^{s-1} ( \eps + n^{-1/2} + \sigma + e^{-q} ).  \]
Setting 
\begin{equation}
\label{params}
n = \lceil \eps^{-2} \rceil, \quad
\sigma = \eps \quad \text{and} \quad q = \log (10/\eps)
\end{equation}
yields
\begin{equation} \label{magma3}
 \sum_{v_1 \in V_1 ,\dots, v_s \in V_s}  \1_{v_1 + \dots + v_s = 0} \ll_s \eps \alpha^{-s} \delta^{-s} |G|^{s-1}. 
\end{equation}

Given $1 \leq i \leq s$ and $b \in B$, define
\[ A_{i,b} = \{ a \in A_i : f_i(a+b) < \eta_i + 20 \n{\Fr{f_i}}_1 \tau_i \} .\] 
Let $b_i \in B$ satisfy $|A_{i,b_i}| \leq |A_{i,b}|$ for all $b \in B$. A standard averaging argument, together with \eqref{ltruse}, gives
\begin{align*}
    |A_{i,b_i}| \leq |B|^{-1} \sum_{b \in B} |A_{i,b}| = \sum_{x \in G: f_i(x) < \eta_i + 20 \n{\Fr{f_i}}_1 \tau_i } f_i(x) < \alpha |A_i|/10. 
\end{align*}
Now, if $a \in A_i \setminus A_{i,b_i}$, then $f_i(a+ b_i) \geq \eta_i + 20 \n{\Fr{f_i}}_1 \tau_i$, which in turn combines with the first inclusion in \eqref{magma2} to tell us that either $a+ b_i \in Z_i$ or $a+b_i \in V_i$. For every $1 \leq i \leq s$, we set
\[ W_i = V_i - b_i \quad \text{and} \quad Y_i = A_{i, b_i} \cup (Z_i - b_i) . \]
Then
\[ |Y_i| \leq \alpha |A_i|/10 + e^{-q}|G| \ll \alpha |G| + \eps |G| .\]
Setting $\alpha = \eps^{1/(2s)}$ delivers the desired upper bound for $|Y_i|$.  Moreover, note that $A_i \setminus Y_i \subseteq W_i$ and
\begin{align*}
\sum_{w_1, \in W_1, \dots, w_s \in W_s} \1_{w_1 + \dots + w_s = -b_1 - \dots - b_s} &  = \sum_{v_1 \in V_1, \dots, v_s \in V_s} \1_{v_1 + \dots + v_s = 0} \\
& \ll \eps^{1/2} \delta^{-s} |G|^{s-1}, 
\end{align*}
using \eqref{magma3} and the fact that $\alpha = \eps^{1/(2s)}$.

The simple observation that translates of $(\tau_i,d_i)$-wrappers are also $(\tau_i,d_i)$-wrappers ensures that $W_i$ is a $(\tau_i,d_i)$-wrapper. Our choice of $\tau_i$ in \eqref{choice} along with \eqref{bohrsize} and \eqref{params}  implies that
\[ \tau_i  \gg \alpha \delta^{1/2} \sigma^{n/2} \gg \eps^{1/(2s)} \delta^{1/2} \eps^{\lceil \eps^{-2} \rceil} .  \]
Finally, since $d_i \ll q/\tau_i^2$, our choice of $q$ dispenses the estimate
\[  d_i \ll \tau_i^{-2} \log(10/\eps) .\]
This concludes the proof of Theorem \ref{sem}.

\section{Finite fields}

This section will culminate in a proof of Theorem \ref{thm1}. Here $q = p^m$, for some prime $p$ and some $m \in \bN$.
For $\alp,x \in \bF_q$, let
\[
\psi_\alp(x) = e_p(\Tr(\alp x))
= \psi_1(\alp x).
\]

\subsection{Equidistribution of powers}

\begin{lemma}   \label{writ}
Let $r$ be a positive integer, let $X_1, \dots, X_r$ be 
non-empty subsets of $\bF_q$, and let $1 \leq i_1 < i_2 < \dots < i_r < p$ be integers. Then
\begin{align*}
&\#
\{ n \in \bF_q:  n^{i_j} \in X_j \  ( 1 \leq j \leq r )\} - \ \frac{|X_1| \cdots |X_r|}{q^{r-1}}  \\ &\ll_{i_r}  \n{\Fr{\mathds{1}_{X_1}}}_1 \cdots \n{\Fr{ \mathds{1}_{X_r}}}_1 \sqrt q. 
\end{align*}
\end{lemma}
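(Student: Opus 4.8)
The plan is to expand each indicator $\mathds{1}_{X_j}(n^{i_j})$ via Fourier inversion on $\bF_q$ and isolate the main term from the trivial character. Writing $\mathds{1}_{X_j}(n^{i_j}) = q^{-1}\sum_{\alpha_j \in \bF_q} \Fr{\mathds{1}_{X_j}}(\alpha_j)\,\overline{\psi_{1}(\alpha_j n^{i_j})}$ for each $j$ (using additive characters $\psi_\alpha$ as set up at the start of the section), the counting function becomes
\[
\sum_{n \in \bF_q} \prod_{j=1}^r \mathds{1}_{X_j}(n^{i_j})
= q^{-r} \sum_{\balp \in \bF_q^r} \Fr{\mathds{1}_{X_1}}(\alpha_1)\cdots\Fr{\mathds{1}_{X_r}}(\alpha_r) \sum_{n \in \bF_q} \overline{\psi_1\Big(\sum_{j=1}^r \alpha_j n^{i_j}\Big)}.
\]
The contribution of $\balp = \bzero$ is $q^{-r}\cdot |X_1|\cdots|X_r|\cdot q = |X_1|\cdots|X_r|/q^{r-1}$, which is exactly the claimed main term, so everything reduces to bounding the contribution of $\balp \neq \bzero$.

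First I would dispose of the inner character sum when $\balp \neq \bzero$. For such $\balp$, let $j_0$ be the largest index with $\alpha_{j_0} \neq 0$; then $P(x) = \sum_{j \le j_0} \alpha_j x^{i_j}$ is a polynomial of degree $i_{j_0} \le i_r$, and since $1 \le i_1 < \dots < i_r < p$ we have $p \nmid i_{j_0}$, so Lemma~\ref{DeligneBound} applies and gives
\[
\Big| \sum_{n \in \bF_q} \overline{\psi_1(P(n))} \Big| \le (i_{j_0} - 1)\sqrt q \le i_r \sqrt q.
\]
(The polynomial is nonconstant because $\alpha_{j_0} \neq 0$ and $i_{j_0} \ge 1$, so the trivial character does not cause trouble; the degree-$0$ case $i_1 = 0$ is excluded by hypothesis.)

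Next I would sum up. Putting the Deligne bound into the $\balp \neq \bzero$ part gives, in absolute value, at most
\[
q^{-r} \cdot i_r \sqrt q \cdot \sum_{\balp \in \bF_q^r} \big| \Fr{\mathds{1}_{X_1}}(\alpha_1) \big| \cdots \big| \Fr{\mathds{1}_{X_r}}(\alpha_r) \big|
= q^{-r} i_r \sqrt q \cdot \prod_{j=1}^r \sum_{\alpha_j \in \bF_q} \big| \Fr{\mathds{1}_{X_j}}(\alpha_j) \big|,
\]
where I have simply dropped the constraint $\balp \neq \bzero$ and factored the sum, which is harmless since all summands are nonnegative. By the definition $\n{\Fr{\mathds{1}_{X_j}}}_1 = q^{-1}\sum_{\alpha_j} |\Fr{\mathds{1}_{X_j}}(\alpha_j)|$ (the normalisation from \S2, with $G = \bF_q$, $|G| = q$), the product equals $q^r \prod_j \n{\Fr{\mathds{1}_{X_j}}}_1$, and the whole error term is $\ll_{i_r} \sqrt q \prod_{j=1}^r \n{\Fr{\mathds{1}_{X_j}}}_1$, as required.

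There is no serious obstacle here; the only points needing care are bookkeeping with the $q^{-1}$ normalisation convention for $\n{\cdot}_1$ and for Fourier inversion (so that the $\bzero$ term comes out as $|X_1|\cdots|X_r|/q^{r-1}$ rather than off by a power of $q$), and verifying that for every nonzero $\balp$ the polynomial $P$ is nonconstant of degree coprime to $p$ so that Lemma~\ref{DeligneBound} is applicable — both of which follow from the hypothesis $1 \le i_1 < \dots < i_r < p$.
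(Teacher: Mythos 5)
Your proposal is correct and follows essentially the same route as the paper's proof: orthogonality/Fourier expansion to isolate the main term at $\balp = \bzero$, Deligne's bound (Lemma~\ref{DeligneBound}) on the inner character sum for $\balp \neq \bzero$, and then factoring the resulting sum into the product of Fourier $L^1$ norms. Your extra care in identifying the leading index $j_0$ and checking $p \nmid i_{j_0}$ is a point the paper glosses over, but it is exactly the right justification.
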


\begin{proof}
Put $X = X_1 \times \cdots \times X_r$. 
By orthogonality,
\begin{align*}
& \#
\{ n \in \bF_q   :  n^{i_j} \in X_j \  ( 1 \leq j \leq r )\}  \\
&= \sum_{n \in \bF_q} \sum_{\bx \in X} \mathds{1}_{n^{i_1} =x_1} \cdots \mathds{1}_{n^{i_r} = x_r} \\
&= q^{-r} \sum_{\balp \in \bF_q^r} \sum_{\bx \in X} 
\psi_1(-\balp \cdot \bx) \sum_{n \in \bF_q}  \psi_1(\alp_1 n^{i_1}+ \cdots + \alp_r n^{i_r}) \\
&= q^{-r} \sum_{\balp \in \bF_q^r}
\Fr{\mathds{1}_{X_1}}(\psi_{-\alpha_1}) \cdots \Fr{\mathds{1}_{X_r}}(\psi_{-\alpha_r})  \sum_{n \in \bF_q}  \psi_1(\alpha_1 n^{i_1} + \cdots + \alpha_r n^{i_r}).
\end{align*}
Thus, by Lemma \ref{DeligneBound},
\begin{align*}
&\bigg| \# \{ n \in \bF_q:  n^{i_j} \in X_j \  ( 1 \leq j \leq r )\}  - \frac{|X_1| \cdots |X_r|}{q^{r-1}} \bigg|\\
& \leq q^{-r} \sum_{\bzero \ne \balp \in \bF_q^r}  |\Fr{\mathds{1}_{X_1}}(-\alpha_1)\cdots \Fr{\mathds{1}_{X_1}}(-\alpha_r)\sum_{n \in \bF_q} \psi_1(\alpha_1 n^{i_1} + \cdots + \alpha_r n^{i_r}) | \\
& \leq  \|\Fr{\mathds{1}_{X_1}}\|_1 \cdots \|{\Fr{\mathds{1}_{X_r}}}\|_1 i_r \sqrt q.\qedhere
\end{align*}
\end{proof}

\subsection{Average Fourier decay}

As an additive group, we can identify $\bF_q$ with $G = \bF_p^m$. In this subsection, we use $\langle \cdot , \cdot \rangle$ interchangeably with the dot product on $G$. Let $\br \in G$ and
\[
\psi_\br(\bx) = e_p(\br \cdot \bx).
\]
Bohr sets of rank 1 for this additive character have the form
\[
B = \left \{ \bx \in G: 
\left \| \frac{\br \cdot \bx - c}p \right \| \le \del \right \},
\]
where $c \in \bZ$ and $0 < \del \le 1/2$.

In the case $m=1$, it follows from \cite[Proposition 3.2]{Sem2025}
and the proof of \cite[Proposition 3.3]{Sem2025}
that 
\begin{equation}
\label{AverageDecay}
\| \Fr{\1_B} \|_1 \ll \log p.
\end{equation}
We now extend this to the case of general $m$.

\begin{lem}\label{Fourier decay finite fields} We have
\eqref{AverageDecay}.
\end{lem}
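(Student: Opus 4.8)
The plan is to reduce the general $m$ case to the case $m=1$, i.e.\ to \eqref{AverageDecay}, by exploiting that $\1_B$ is constant on the fibres of the $\bF_p$-linear functional $\bx \mapsto \br \cdot \bx$.

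First I would dispense with the degenerate case $\br = \bzero$: then $B$ equals $G$ or $\emptyset$ according to whether $\n{c/p} \le \del$ or not, and in either case $\n{\Fr{\1_B}}_1 \le 1 \ll \log p$. So assume henceforth that $\br \ne \bzero$. Then $a \mapsto a\br$ is an injection $\bF_p \hookrightarrow G$, and, putting
\[
P = \{ t \in \bF_p : \n{(t-c)/p} \le \del \},
\]
we have $\1_B(\bx) = \1_P(\br \cdot \bx)$ for every $\bx \in G$. Observe that $P$ is an interval, hence an arithmetic progression of common difference $1$, in $\bF_p$; equivalently it is a rank-$1$ Bohr set in $\bF_p$, so the $m=1$ instance of \eqref{AverageDecay} applies to $P$ and gives $\n{\Fr{\1_P}}_1 \ll \log p$ (this is also the standard bound for arithmetic progressions used in the Example).

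Next I would compute $\Fr{\1_B}$ explicitly. Expanding $\1_P$ by Fourier inversion on $\bF_p$ and substituting $t = \br \cdot \bx$ yields, for each $\bs \in G$,
\[
\Fr{\1_B}(\psi_\bs) = \sum_{\bx \in G} \1_P(\br \cdot \bx)\, e_p(\bs \cdot \bx) = p^{-1} \sum_{a \in \bF_p} \Fr{\1_P}(a) \sum_{\bx \in G} e_p\big( (\bs - a\br) \cdot \bx \big).
\]
By orthogonality on $G$ the inner sum equals $q$ when $\bs = a\br$ and vanishes otherwise, and since $\br \ne \bzero$ there is at most one such $a \in \bF_p$ for each $\bs$. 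Hence $\Fr{\1_B}(\psi_\bs) = p^{m-1} \Fr{\1_P}(a)$ if $\bs = a\br$ for some $a \in \bF_p$, and $\Fr{\1_B}(\psi_\bs) = 0$ otherwise. Summing absolute values over $\bs \in G$ and dividing by $|G| = p^m$ gives
\[
\n{\Fr{\1_B}}_1 = p^{-m} \sum_{a \in \bF_p} p^{m-1} |\Fr{\1_P}(a)| = p^{-1} \sum_{a \in \bF_p} |\Fr{\1_P}(a)| = \n{\Fr{\1_P}}_1 \ll \log p,
\]
the last norm being taken over $\bF_p$. This is the desired bound \eqref{AverageDecay}.

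I do not expect any genuine obstacle here: the argument is a short pullback computation. The only points requiring minor care are the bookkeeping with the normalised Fourier $L^q$-norms fixed in \S2 (the $|G|^{-1}$ factors) and separating off the trivial case $\br = \bzero$ at the outset.
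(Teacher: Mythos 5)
Your proof is correct, and it reaches the same structural conclusion as the paper's --- namely that $\Fr{\1_B}$ is supported on the line $\bF_p \br \subseteq \Fr{G}$ and, restricted there, is $p^{m-1}$ times the Fourier transform of an interval in $\bF_p$ --- but via a cleaner route. The paper chooses an explicit complement of the hyperplane $H = \{\bx : \br\cdot\bx = 0\}$, writing $G = H \oplus \bF_p\bl$ and $B = H + U\bl$, which forces a case distinction according to whether $\br\cdot\br = 0$ (so that $\br$ itself can or cannot serve as $\bl$); the support of $\Fr{\1_B}$ is then located by checking when the character sum over $H$ is nonvanishing. You instead use that $\1_B = \1_P \circ (\br\cdot)$ and expand $\1_P$ by Fourier inversion on $\bF_p$, so orthogonality on $G$ immediately confines the support to $\{a\br : a \in \bF_p\}$ with no choice of complement and no case analysis; the sign conventions and the $|G|^{-1}$ normalisation in your final computation $\n{\Fr{\1_B}}_1 = \n{\Fr{\1_P}}_1$ check out. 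Both arguments ultimately invoke the same $m=1$ input ($\n{\Fr{\1_P}}_1 \ll \log p$ for an interval/progression $P$ in $\bF_p$), so nothing is lost; what your version buys is the elimination of the $\br\cdot\br = 0$ dichotomy, which is an artefact of the direct-sum decomposition rather than of the problem.
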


\begin{proof}
First suppose $\br = \bzero$. Then $B$ is either empty or $G$. If $B$ is empty, then $\| \Fr{\1_B} \|_1 = 0.$ If $B = G$, then
\[
\| \Fr{\1_B} \|_1 = |G|^{-1} |\Fr{\1_G}(\bzero)| = 1.
\]
It suffices to consider two other cases for $\br$, namely:
\begin{enumerate} [I.]
\item $\br \cdot \br \ne 0$
\item $\br \cdot \br = 0$.
\end{enumerate}
Let $\bl \in G \setminus H$, where
\[
H = \{ \bx \in G: \br \cdot \bx = 0 \}.
\]
In Case I, we choose $\bl = \br$ for simplicity.

We decompose
\[
G = H \oplus \bF_p \bl.
\]
This restricts to a decomposition
\[
B = H + U \bl,
\]
where
\[
U = \left \{ u \in \bF_p: 
\left \| \frac{u \bl \cdot \br - c}p \right \| \le \del \right \},
\]
and each element of $B$ is  $\bh + u \bl$ for some unique pair $(\bh, u) \in H \times U$.
Now
\[
\Fr{\1_B}(\bxi) = \sum_{\bx \in B} e_p(\bx \cdot \bxi),
\]
so
\begin{align*}
\| \Fr{\1_B} \|_1 &= p^{-m} 
\sum_{\substack{\bxi_0 \in H \\ v \in \bF_p}}
\left|
\sum_{u \in U} \sum_{\bh \in H}
e_p( \langle \bh + u \bl, \bxi_0 + v \bl \rangle) 
\right| \\
&= p^{-m} 
\sum_{\substack{\bxi_0 \in H \\ v \in \bF_p}}
\left|
\sum_{u \in U} e_p(u \langle \bl, \bxi_0 + v \bl \rangle)
\sum_{\bh \in H}
e_p( \langle \bh, \bxi_0 + v \bl \rangle)
\right|.
\end{align*}
The inner sum vanishes unless $\bxi_0 + v \bl = \lam \br$ for some $\lam \in \bF_p$. 

\subsection*{Case I}

In this case $\bl = \br$.
For the inner sum not to vanish, we must have $\bxi_0 \| \br$ and hence $\bxi_0 = \bzero$, since $\br \cdot \bxi_0 = 0$ and $\br \cdot \br \ne 0$. Consequently,
\[
\| \Fr{\1_B} \|_1 
= p^{-1} \sum_{v \in \bF_p} \left| \sum_{u \in U} e_p(uv \br \cdot \br) \right| = p^{-1} \sum_{v \in \bF_p} \left| \sum_{u \in U} e_p(uv) \right|.
\]
This is $O(\log p)$, as we saw in the case $m=1$.

\subsection*{Case II}

As $\bl \notin H$ and $\bxi_0, \br \in H$, we cannot have $\bxi_0 + v \bl = \lam \br$ unless $v = 0$ and $\bxi_0 = \lam \br$. Therefore
\[
\| \Fr{\1_B} \|_1 = p^{-1} \sum_{\lam \in \bF_p} \left| \sum_{u \in U} e_p(u \lam \bl \cdot \br) \right| = p^{-1} \sum_{\lam \in \bF_p} \left| \sum_{u \in U} e_p(u \lam) \right|.
\]
This is $O(\log p)$, as in Case I.
\end{proof}

We can bootstrap this to infer average Fourier decay for wrappers.

\begin{lemma} \label{wrapperfourier}
Let $W$ be a $(\tau,d)$-wrapper in $\bF_q$. Then
\[
\| \Fr{\1_W} \|_1 \ll_{\tau,d} (\log p)^d.
\]
\end{lemma}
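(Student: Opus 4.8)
The plan is to write the indicator of a wrapper as a sum of indicators of its constituent Bohr sets and reduce to controlling $\|\Fr{\1_{B_\bv}}\|_1$ for a single rank-$d$ inhomogeneous Bohr set. Recall that $W = \cup_{\bv \in X} B_\bv$ where each $B_\bv = \{\bx \in G : \gamma_i(\bx) \in S_{v_i}\ (1 \le i \le d)\}$ for fixed characters $\gamma_1, \dots, \gamma_d$ and a fixed partition of $\bS^1$ into at most $r \ll \tau^{-1}$ arcs. By inclusion--exclusion (or simply by a crude union bound, since we only want an upper bound on the $L^1$ norm), $\|\Fr{\1_W}\|_1 \le \sum_{\bv \in X} \|\Fr{\1_{B_\bv}}\|_1 \le r^d \max_\bv \|\Fr{\1_{B_\bv}}\|_1$, so it suffices to bound $\|\Fr{\1_{B_\bv}}\|_1 \ll_{\tau} (\log p)^d$ for each single $B_\bv$. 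Here I should be a little careful: a general arc $S_{v_i}$ need not be centred at $1$, but it is a translate of an arc centred at $1$, i.e. $S_{v_i} = \zeta_i \cdot S_{\sigma_i}$ for some $\zeta_i \in \bS^1$ and $\sigma_i \le \tau$; the condition $\gamma_i(\bx) \in S_{v_i}$ is then $\gamma_i(\bx)\overline{\zeta_i} \in S_{\sigma_i}$, and in $\bF_p^m$ this is exactly a rank-$1$ Bohr set condition $\|(\br_i \cdot \bx - c_i)/p\| \le \sigma_i$ with the inhomogeneity $c_i$ coming from the phase of $\zeta_i$ (rounded to the nearest integer, which costs only an $O(\sigma_i)$ thickening that is harmless). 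Thus $B_\bv$ is an intersection of $d$ rank-$1$ inhomogeneous Bohr sets of the precise shape treated in Lemma \ref{Fourier decay finite fields}.

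The heart of the argument is then a \emph{tensorisation} step: the $L^1$ Fourier norm is submultiplicative under intersection of Bohr sets in the following sense. Write $\1_{B_\bv} = \prod_{i=1}^d \1_{C_i}$ where $C_i = \{\bx : \|(\br_i \cdot \bx - c_i)/p\| \le \sigma_i\}$. On the Fourier side, a product of functions becomes a convolution: $\Fr{\1_{B_\bv}} = \Fr{\1_{C_1}} * \cdots * \Fr{\1_{C_d}}$ (with the normalised convolution on $\Fr{G}$ defined in \S2), and since convolution on $\Fr{G}$ satisfies $\|g_1 * g_2\|_1 \le \|g_1\|_1 \|g_2\|_1$ by Young's inequality (with the averaged $L^1$ norm as defined), we get $\|\Fr{\1_{B_\bv}}\|_1 \le \prod_{i=1}^d \|\Fr{\1_{C_i}}\|_1$. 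Each factor $\|\Fr{\1_{C_i}}\|_1 \ll \log p$ by Lemma \ref{Fourier decay finite fields} (its proof goes through verbatim for arcs centred anywhere, since the relevant one-dimensional sum $p^{-1}\sum_{v} |\sum_{u \in U} e_p(uv)|$ with $U$ an interval is still $O(\log p)$ regardless of where the interval sits). Combining, $\|\Fr{\1_{B_\bv}}\|_1 \ll (\log p)^d$, and then $\|\Fr{\1_W}\|_1 \ll r^d (\log p)^d \ll_{\tau,d} (\log p)^d$ since $r \ll \tau^{-1}$.

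The step I expect to need the most care is the convolution/Young's inequality bookkeeping with the paper's normalisation of $\|\cdot\|_1$ on $\Fr{G}$ and of convolution on $\Fr{G}$: with the factors of $|G|^{-1}$ in both definitions, one must check that $\|g_1 * g_2\|_1 \le \|g_1\|_1 \|g_2\|_1$ holds \emph{without} a spurious power of $|G|$, which it does precisely because the convolution on $\Fr G$ carries a matching $|G|^{-1}$. A clean alternative that sidesteps all normalisation worries is to bound directly: $\|\Fr{\1_{B_\bv}}\|_1 = |G|^{-1}\sum_{\bxi}|\sum_{\bx \in B_\bv} e_p(\bx \cdot \bxi)|$, expand $\1_{B_\bv}$ via a smooth Fourier expansion of each arc indicator $\1_{S_{\sigma_i}}$ (Fejér-type, so that the one-dimensional Fourier coefficients have $\ell^1$ norm $O(\log p)$ after truncation), and multiply out --- this reproduces the $(\log p)^d$ bound with the constant depending on $d$ through the $d$-fold product and on $\tau$ through the arc lengths. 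Either route, the only genuinely new content beyond Lemma \ref{Fourier decay finite fields} is the elementary tensorisation, so the lemma follows without difficulty.
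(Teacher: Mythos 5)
Your proposal is correct and follows essentially the same route as the paper: decompose the wrapper into its $O_{\tau,d}(1)$ constituent Bohr sets via the triangle inequality, write each as an intersection of $d$ rank-one inhomogeneous Bohr sets, and tensorise via Young's inequality on $\Fr G$ together with Lemma \ref{Fourier decay finite fields}. The extra care you take over the off-centre arcs and the normalisation of the convolution is already absorbed by the paper's formulation of the rank-one lemma (which allows an arbitrary inhomogeneity $c$), so nothing further is needed.
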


\begin{proof}
By definition, the wrapper $W$ is a disjoint union of $O_{\tau,d}(1)$ many sets
$B = B_\bv$ as given by \eqref{Bohr}, and so, by a simple application of triangle inequality, it suffices to prove the desired result for the set $B_{\bv}$. In this case, note that we can write
\[
B = B_1 \cap \cdots \cap B_d,
\]
for some Bohr sets $B_1, \ldots, B_d$ of rank 1. Finally, Young's convolution inequality and Lemma \ref{AverageDecay} deliver
\begin{align*}
\| \Fr{\1_B} \|_1
&= \| \Fr{\1_{B_1} \cdots \1_{B_r}} \|_1
= \| \Fr{\1_{B_1}} * \cdots *\Fr{\1_{B_r}} \|_1 \\ &\le \| \Fr{\1_{B_1}} \|_1 \cdots \| \Fr{\1_{B_r}} \|_1 \ll (\log p)^d. \qedhere
\end{align*}
\end{proof}

\subsection{Dense Fermat over finite fields} 
We now establish Theorem~\ref{thm1}. Assume for a contradiction that there are fewer than $\eps q^{s-1}$ many solutions $\bx \in A^s$ to 
\eqref{MainEq}.
Define 
\[
A_n = \{ c_n a^{i_j} : a \in A\}
\]
for all $n \in I_j$ and all $1 \leq j \leq r$. Note that $|A_n| \geq |A|/i_r \gg_{i_r, s} q$. We may now apply Theorem \ref{sem} to find, for every $1 \leq i \leq s$, a $(\tau_i, d_i)$-wrapper $W_i$ with $\tau_i \gg_{\eps, s, i_r} 1$ and $d_i \ll_{\eps, s, i_r} 1$ such that 
\[
|A_i \setminus W_i| \ll \eps^{1/(2s)}q
\]
and 
\begin{equation} \label{flyingmicrotonalbanana1}
\sum_{w_1 \in W_1, \dots, w_s \in W_s} \1_{w_1 + \dots + w_s = b} \ll_{s, i_r} \eps^{1/2} q^{s-1},
\end{equation}
for some $b \in \mathbb{F}_q$. 

We now claim that 
\begin{equation} \label{flyingmicrotonalbanana2}
|W_1| + \dots + |W_s| < \left(1 + \frac{\kappa}{10}\right)q. 
\end{equation}
We prove this claim via contradiction, so let us assume that it does not hold. Let $C$ be a large, positive constant. As $\eps$ is sufficiently small, we have 
\[ 
|W_i| \geq |A_i| - C\eps^{1/(2s)} q \gg_{i_r,s} q  
\]
for all $1 \leq i \leq s$. We may now apply Lemma \ref{PCD}, giving
\[ 
\sum_{w_1 \in W_1, \dots, w_s \in W_s} \1_{w_1 + \dots + w_s} \gg_{s, i_r, \kappa} q^{s-1}. 
\]
As $\eps$ is sufficiently small in terms of $s, i_r, \kappa$, this contradicts \eqref{flyingmicrotonalbanana1}. 

We may therefore assume that \eqref{flyingmicrotonalbanana2} holds. For every $\bn \in I_1 \times \dots \times I_r$, let 
\[ 
X_{\bn} =\{ x \in \mathbb{F}_q : c_{n_j} x^{i_j} \in W_{n_j} \ (1 \leq j \leq r) \} . 
\]
By Lemma \ref{wrapperfourier} and a change of variables, we note that if $W$ is a $(\tau,d)$-wrapper in $\bF_q$ and $c \in \bF_q^\times$ then
\[
\| \Fr{\1_{cW}} \|_1 \ll_{\tau,d} (\log p)^d.
\]
Combining this with Lemma \ref{writ} yields
\begin{align*}
|X_{\bn}|  - \frac{|W_{n_1}| \cdots |W_{n_r}|}{q^{r-1}} 
 & \ll_{s, i_r, \eps} ( \log p)^{O_{\eps, s,i_r}(1)} q^{1/2}. 
 \end{align*}
Since $|A_i \setminus W_i| \ll \eps^{1/(2s)} q$ for all $1 \leq i \leq s$,
\[ |A| \leq |X_{\bn}|  + O_{i_r, s}(\eps^{1/(2s)} q) .\]
Coupling this with the preceding bound, and then averaging over all $\bn \in I_1 \times \dots \times I_r$, gives
\begin{align} \label{flyingmicrotonalbanana3}
\frac{|A|}{q} & \leq \frac{1}{k_1 \cdots k_r} \sum_{\bn \in I_1 \times \dots\times I_r} \frac{|W_{n_1}| \cdots |W_{n_r}| }{q^{r}} \nonumber \\
& \qquad + O_{s, i_r}(\eps^{1/(2s)}) + O_{\eps, i_r, s}((\log p)^{O_{\eps,s,i_r}(1)} q^{-1/2}). 
\end{align}

Clearly, we may assume that $\kap$ is sufficiently small in terms of $r$. We may also assume that $\eps$ is a function of $s, i_r, \kap$, as the result would then follow for all smaller values of $\eps$. This enables us to subsume any dependence on $\eps$ into the dependence on $s, i_r, \kap$.

By the AM--GM inequality and \eqref{flyingmicrotonalbanana2}, the first term on the right-hand side of \eqref{flyingmicrotonalbanana3}
is bounded above by
\begin{align*}
\frac{1}{ q^r  k_1 \cdots k_r} \left(  r^{-1} \sum_{j=1}^r \sum_{n_j \in I_j} |W_{n_j}| \right)^r & \leq \frac{(1 + \kappa/10)^r}{k_1 \cdots k_r r^r} \\
& \leq \frac{1 +r\kappa/5}{k_1 \cdots k_r r^r} \le  \prod_{1 \le j \le r} \frac1{rk_j} +  \frac{\kappa}{5}.
\end{align*}
The other terms on the right-hand side of \eqref{flyingmicrotonalbanana3} are smaller than $\kappa/5$. Therefore 
\[
|A| < \left( \prod_{1 \le j \le r} \frac1{rk_j} +  \frac{3\kappa}{5} \right)q,
\]
contradicting the hypothesis \eqref{DensityAssumption}. Hence, we must have at least $\eps q^{s-1}$ many solutions to \eqref{MainEq} with $x_1, \dots, x_s \in A$.

\section{Finite, cyclic rings}

In this section, we will first prove that the wrappers are nicely additively structured, in the sense that the powers are equidistributed among them. Then we will use this property to deduce Theorem \ref{thm2}. 

\subsection{Equidistribution of powers in wrappers}

Let $N$ be a positive integer, let $\beta_1,\beta_2,\dots,\beta_d\in \bZ/N\bZ$ be frequencies, and let $\bI_1,\bI_2, \dots,\bI_d$ be real intervals. The inhomogeneous Bohr set $$B=B(\beta_1,\beta_2,\dots,\beta_d; \bI_1,\bI_2,\dots,\bI_d)$$ is defined to be
\begin{equation*}
=\left\{x\in \bZ/N\bZ: \frac{\beta_j x}{N}\in \bI_j \text{\;$\mathrm{mod}\;1$ for all $1\leq j\leq d$}\right\}.
\end{equation*}
We refer to $d$ as the \emph{rank} of $B$. (Strictly speaking, the rank is a property of the Bohr set data, rather than a property of the set.)

The following lemma, which is in the same spirit as Lemma \ref{Fourier decay finite fields}, asserts that the inhomogeneous Bohr sets in $\bZ/p^m\bZ$ have considerable average Fourier decay.

\begin{lemma}
\label{Fourier decay prime power}
Let $p$ be a prime, let $m\in \bN$, and let $B$ be an inhomogeneous Bohr set of rank $d$ in $\bZ/p^m\bZ$. Then
    \begin{equation*}
        \n{\Fr{\1_B}}_1 \leq (C\log p^m)^d,
    \end{equation*}
    where $C$ is some absolute constant.
\end{lemma}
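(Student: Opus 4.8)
The plan is to follow the same two-step template used for the finite-field results: first reduce to rank-$1$ Bohr sets by Young's convolution inequality exactly as in the proof of Lemma~\ref{wrapperfourier}, and then establish the rank-$1$ estimate $\n{\Fr{\1_B}}_1 \ll \log p^m$ by hand. For the reduction, write $B = B(\beta_1,\dots,\beta_d;\bI_1,\dots,\bI_d) = \bigcap_{j=1}^d B_j$ with $B_j := B(\beta_j;\bI_j)$ a rank-$1$ Bohr set, so that $\1_B = \1_{B_1}\cdots \1_{B_d}$ pointwise, hence $\Fr{\1_B} = \Fr{\1_{B_1}} * \cdots * \Fr{\1_{B_d}}$, and therefore $\n{\Fr{\1_B}}_1 \le \prod_{j=1}^d \n{\Fr{\1_{B_j}}}_1$. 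It thus suffices to prove $\n{\Fr{\1_{B_j}}}_1 \le C_0 \log p^m$ for every rank-$1$ Bohr set and an absolute constant $C_0$; then $C = C_0$ works.

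For the rank-$1$ bound, the one genuinely new phenomenon compared with the $\bF_p$ setting is that the frequency $\beta$ may be divisible by $p$. If $\beta = 0$ then $B$ is $\emptyset$ or $\bZ/p^m\bZ$ and the estimate is immediate. Otherwise write $\beta = p^v \beta'$ with $0 \le v < m$ and $p \nmid \beta'$. Since $\beta x/p^m \equiv \beta' x/p^{m-v} \pmod 1$, the condition defining $B$ depends only on $x \bmod p^{m-v}$, so $\1_B = \1_{B'} \circ \pi$, where $\pi\colon \bZ/p^m\bZ \to \bZ/p^{m-v}\bZ$ is the natural projection and $B' = B(\beta';\bI)$ is a rank-$1$ Bohr set in $\bZ/p^{m-v}\bZ$ with frequency coprime to $p$. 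A direct computation of the Fourier transform of a function pulled back along $\pi$ (splitting $x = y + p^{m-v}z$) gives $\Fr{\1_B}(\xi) = p^v\, \Fr{\1_{B'}}(\xi')$ when $\xi = p^v\xi'$ and $\Fr{\1_B}(\xi)=0$ otherwise, from which, comparing the normalised $L^1$ norms on the two groups, one gets $\n{\Fr{\1_B}}_1 = \n{\Fr{\1_{B'}}}_1$. Finally, dilating the variable by $(\beta')^{-1} \bmod p^{m-v}$ — an automorphism of $\bZ/p^{m-v}\bZ$ that merely permutes the characters and so preserves $\n{\Fr{\,\cdot\,}}_1$ — reduces us to the case of frequency $1$, i.e. to $B'$ being an honest interval (a progression of common difference $1$, possibly wrapping around) in $\bZ/p^{m-v}\bZ$.

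It then remains to invoke the $\bZ/M\bZ$-analogue of the standard fact quoted for $\bF_p$, namely $\n{\Fr{\1_P}}_1 \ll \log M$ for an interval $P \subseteq \bZ/M\bZ$; after translating so that $P = \{0,1,\dots,L-1\}$, this follows from the geometric-series bound $|\Fr{\1_P}(\xi)| \le \min\{L,\ (2\n{\xi/M})^{-1}\}$ together with $\sum_{1 \le \xi \le M-1}\n{\xi/M}^{-1} \ll M\log M$. Applying this with $M = p^{m-v} \le p^m$ yields $\n{\Fr{\1_B}}_1 = \n{\Fr{\1_{B'}}}_1 \ll \log p^m$, which completes the rank-$1$ case and hence the lemma. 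I expect the only mildly delicate point to be the pull-back computation for $\pi$ and the attendant bookkeeping of normalisation constants between $\bZ/p^m\bZ$ and $\bZ/p^{m-v}\bZ$; the rest parallels Lemmas~\ref{Fourier decay finite fields} and~\ref{wrapperfourier} verbatim.
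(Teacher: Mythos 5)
Your proposal is correct and follows essentially the same route as the paper: reduce to rank one via Young's convolution inequality, factor the frequency as $p^v\beta'$ with $p\nmid\beta'$ so that the Bohr set is pulled back from $\bZ/p^{m-v}\bZ$ (the paper phrases this as $B=Q+H$ with $Q$ an arithmetic progression and $H$ the subgroup generated by $p^{m-v}$), and then invoke the standard geometric-series bound $\sum_{\xi}\min\{L,\n{\xi z/M}_{\bR/\bZ}^{-1}\}\ll M\log M$. The only cosmetic difference is that you normalise the progression to an interval by dilating by $(\beta')^{-1}$, whereas the paper keeps the common difference $z$ and bounds the sum directly; both are valid.
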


\begin{proof}
First, we consider the case where the Bohr set $B$ has rank $1$. Say $B=B(p^\ell \beta;\bI)$, where $\beta$ is not divisible by $p$ and $\ell < m$.

Note that for $x\in \bZ/p^m\bZ$, whether $p^\ell \beta x/p^m=\beta x/p^{m-\ell}$ is in $\bI$ depends only on the residue class of $x \mmod p^{m-\ell}$. Thus, the Bohr set $B$ is of the form $Q+H$, where $Q$ is an arithmetic progression $\mmod p^{m-\ell}$ with common difference $z$ not divisible by $p$, and $H$ is the subgroup of $\bZ/p^m\bZ$ generated by the element $p^{m-\ell}$. With this description, we can compute $\Fr{\1_B}$ explicitly. Indeed,
\begin{align*}
\Fr{\1_B}(\xi) &=
\sum_{q\in Q} \ \sum_{0\leq x< p^\ell} e_{p^m}((q+xp^{m-\ell})\xi) = \sum_{q\in Q} e_{p^m}(q\xi) \sum_{0\leq x< p^\ell} e_{p^\ell}(x\xi) \\
&=\begin{cases}
0, & \text{if }p^l \nmid \xi\\
p^\ell \sum_{q\in Q}e_{p^m}(q\xi) , & \text{if }p^\ell \mid \xi
\end{cases}\\
&\ll\begin{cases}
0, & \text{if }p^\ell \nmid \xi\\
p^\ell \n{\frac{\xi z}{p^m}}_{\bR/\bZ}^{-1}, & \text{if }p^\ell \mid \xi.
\end{cases}
\end{align*}
Thus, we have
\begin{align*}
\sum_{\xi \in \bZ/p^m\bZ}|\Fr{\1_B}(\xi)|&\ll p^m + p^\ell \sum_{1\leq \eta< p^{m-l}} \left\| \frac{\eta z}{p^{m-\ell}} \right\|_{\bR/\bZ}^{-1} \\
&\ll p^m + p^\ell \sum_{1\leq \eta < p^{m-\ell}} \frac{p^{m-l}}{\eta} \ll p^m \log(p^m),
\end{align*}
which gives the desired bound.

For higher-rank Bohr sets, we observe that $\n{\Fr{\1_B1_{B'}}}_1\leq\n{\Fr{\1_B}}_1\n{\Fr{\1_{B'}}}_1$ for any Bohr sets $B,B'$ by Young's inequality. Hence, combining this inequality and the fact that higher-rank Bohr sets are intersections of rank $1$ Bohr sets completes the proof.
\end{proof}

Next, we will prove that the powers are equidistributed among the inhomogeneous Bohr sets in $\bZ/p^m\bZ$.

\begin{prop}
\label{equidistribution prime power}
Let $i_1< \dots<i_r$ be positive integers. Let $p>i_r$ be prime, let $m\in \bN$, and let $B_1, \ldots, B_r$ be inhomogeneous Bohr sets of rank $d$ in $\bZ/p^m\bZ$. Let $c_1, \ldots,c_r\in (\bZ/p^m\bZ)^{\times}$. Then
\begin{align*}
\sum_{x\in \bZ/p^m\bZ}
\1_{B_1}(c_1 x^{i_1}) \cdots \1_{B_r}(c_r x^{i_r}) &= |B_1| \cdots |B_r|p^{-m(r-1)} \\
&\quad+ O_{i_r,d}(p^{m-1/i_r} \log^{dr} p^m).
\end{align*}
\end{prop}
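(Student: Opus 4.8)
The plan is to imitate the proof of Lemma~\ref{writ}, with Deligne's bound replaced by its prime-power analogue (Lemma~\ref{DeligneBoundAlt}) and the Fourier $L^1$ norms of Bohr sets controlled via Lemma~\ref{Fourier decay prime power}. Identifying $\widehat{\bZ/p^m\bZ}$ with $\bZ/p^m\bZ$ through $\xi\mapsto(x\mapsto e_{p^m}(\xi x))$, I would expand each $\1_{B_j}(c_j x^{i_j})$ by Fourier inversion, multiply over $1\le j\le r$, sum over $x$, and interchange summation to obtain
\[
\sum_{x\in\bZ/p^m\bZ}\prod_{j=1}^r \1_{B_j}(c_j x^{i_j}) = p^{-mr}\sum_{\bxi\in(\bZ/p^m\bZ)^r}\Big(\prod_{j=1}^r\Fr{\1_{B_j}}(\xi_j)\Big) S(\bxi),
\]
where $S(\bxi)=\sum_{x\in\bZ/p^m\bZ}e_{p^m}\big(-\sum_{j} c_j\xi_j x^{i_j}\big)$. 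The frequency $\bxi=\mathbf 0$ contributes $p^{-mr}\cdot|B_1|\cdots|B_r|\cdot p^m=|B_1|\cdots|B_r|p^{-m(r-1)}$, the desired main term; the rest must be shown to be an acceptable error.

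The crux is the uniform bound $|S(\bxi)|\ll_{i_r}p^{m-1/i_r}$ for all $\bxi\neq\mathbf 0$. To prove it, let $j^\ast$ be the largest index with $\xi_{j^\ast}\neq 0$ in $\bZ/p^m\bZ$; then $-\sum_j c_j\xi_j x^{i_j}$ is a polynomial in $(\bZ/p^m\bZ)[x]$ of degree exactly $i_{j^\ast}\le i_r<p$, since its leading coefficient $-c_{j^\ast}\xi_{j^\ast}$ is nonzero ($c_{j^\ast}$ being a unit). Lemma~\ref{DeligneBoundAlt} then gives $|S(\bxi)|\ll_{i_{j^\ast}}p^{m-1/i_{j^\ast}}\le p^{m-1/i_r}$, and taking the maximum of the implied constant over the finitely many values $i_{j^\ast}\in\{i_1,\dots,i_r\}$ makes it uniform. (If one prefers to avoid applying Lemma~\ref{DeligneBoundAlt} with a leading coefficient divisible by $p$, one may first pull out the largest power $p^e$ dividing every $\xi_j$ and use $e_{p^m}(p^e\,\cdot)=e_{p^{m-e}}(\cdot)$ to reduce to a phase polynomial over $\bZ/p^{m-e}\bZ$ with $p$-adic content $1$, the standard hypothesis for such estimates.)

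It then remains to estimate the error crudely: it is, up to an $O_{i_r}(1)$ factor, at most $p^{-mr}\,p^{m-1/i_r}\prod_{j=1}^r\sum_{\xi_j\in\bZ/p^m\bZ}|\Fr{\1_{B_j}}(\xi_j)|$. Recalling $\n{g}_1=|G|^{-1}\sum_\gamma|g(\gamma)|$, so that $\sum_{\xi_j}|\Fr{\1_{B_j}}(\xi_j)|=p^m\n{\Fr{\1_{B_j}}}_1$, Lemma~\ref{Fourier decay prime power} bounds each factor by $p^m(C\log p^m)^d$. Hence the error is $\ll_{i_r}p^{-mr}\,p^{m-1/i_r}\,p^{mr}(C\log p^m)^{dr}=O_{i_r,d}(p^{m-1/i_r}\log^{dr}p^m)$, using $r\le i_r$ to absorb $C^{dr}$; together with the main term this gives the claim.

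I expect the only genuine obstacle to be the uniform exponential-sum estimate $|S(\bxi)|\ll_{i_r}p^{m-1/i_r}$: because the frequencies $\xi_j$ range over all of $\bZ/p^m\bZ$, the coefficients $c_j\xi_j$ may be divisible by $p$ and the effective degree of the phase polynomial over $\bZ/p^m\bZ$ must be tracked carefully, as above. The remaining steps are routine bookkeeping.
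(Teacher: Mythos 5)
Your proof is correct and follows essentially the same route as the paper: Fourier-expand each $\1_{B_j}(c_j x^{i_j})$, isolate the zero frequency as the main term $|B_1|\cdots|B_r|p^{-m(r-1)}$, bound the exponential sum at every nonzero frequency uniformly by Lemma \ref{DeligneBoundAlt}, and control the resulting $L^1$ sums of $\Fr{\1_{B_j}}$ via Lemma \ref{Fourier decay prime power}. Your additional care about the $p$-adic content of the phase polynomial when invoking Lemma \ref{DeligneBoundAlt} addresses a point the paper passes over silently, and your fix (extracting the content and reducing the modulus) is the standard and correct one.
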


\begin{proof}
By Fourier expansion and Lemma \ref{DeligneBoundAlt}, 
\begin{align*}
&\sum_{x\in \bZ/p^m\bZ} \1_{B_1}(c_1 x^{i_1}) \cdots \1_{B_r}(c_r x^{i_r}) \\
&=p^{-mr}\sum_{x\in \bZ/p^m\bZ} \ \sum_{\xi_1,\ldots,\xi_r\in \bZ/p^m\bZ}\Fr{\1_{B_1}}(\xi_1) \cdots \Fr{\1_{B_r}}(\xi_r) \\
&\qquad \qquad \qquad \qquad \qquad \qquad \cdot e_{p^m}(-\xi_1 c_1 x^{i_1}- \ \cdots \ -\xi_r c_r x^{i_r})\\
&=|B_1| \cdots |B_r|p^{-m(r-1)}+O_{i_r}(p^{m-1/i_r}\n{\Fr{\1_{B_1}}}_1 \cdots \n{\Fr{\1_{B_r}}}_1).
\end{align*}
Thus, applying Lemma \ref{Fourier decay prime power} gives the desired error term.
\end{proof}

To extend Proposition \ref{equidistribution prime power} to general moduli $N=p_1^{m_1} \cdots p_t^{m_t}$, such Fourier-analytic arguments are --- perhaps surprisingly --- less effective. The reason is that that the analysis of Hardy--Littlewood major arcs in $\bZ/N\bZ$ is more delicate; on some non-zero major arcs we only save $\sqrt{p_1}$ over the trivial exponential sum bound, but $p_1$ might be considerably smaller than other prime factors, rendering this saving not as large as we would like. To overcome this issue, we argue in the physical space and utilise the product structure of $\bZ/N\bZ$ to deduce the following proposition. 

\begin{prop}
\label{equidistribution mod N}
Let $i_1< \dots<i_r$ be positive integers. Let $$N=p_1^{m_1} \cdots p_t^{m_t} \quad \text{with} \quad p_1,\ldots,p_t>i_r,$$
and let $B_1,B_2,\ldots,B_r$ be inhomogeneous Bohr sets of rank $d$ in $\bZ/N\bZ$. Let $c_1,\ldots,c_r\in (\bZ/N\bZ)^{\times}$. Then
\begin{align*}
\sum_{x\in \bZ/N\bZ} \1_{B_1}(c_1 x^{i_1}) \cdots \1_{B_r}(c_r x^{i_r}) &= |B_1| \cdots |B_r| N^{-(r-1)} \\ &\quad +O_{i_r,d}\left(N\sum_{1\leq j\leq t}\frac{\log^{dr}p_j^{m_j}}{p_j^{1/i_r}}\right).
\end{align*}
\end{prop}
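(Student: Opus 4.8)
The plan is to iterate Proposition \ref{equidistribution prime power} once for each prime $p_j$, using the Chinese Remainder Theorem isomorphism $\phi : \bZ/N\bZ \xrightarrow{\ \sim\ } \prod_{j=1}^t \bZ/p_j^{m_j}\bZ$. Write $x \leftrightarrow (x_1,\dots,x_t)$ and $c_\ell \leftrightarrow (c_\ell^{(1)},\dots,c_\ell^{(t)})$ under $\phi$, so that each $c_\ell^{(j)}$ is a unit modulo $p_j^{m_j}$ and $c_\ell x^{i_\ell} \leftrightarrow (c_\ell^{(1)}x_1^{i_\ell},\dots,c_\ell^{(t)}x_t^{i_\ell})$. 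If $B_\ell = B_\ell(\beta_1^{(\ell)},\dots,\beta_d^{(\ell)};\bI_1^{(\ell)},\dots,\bI_d^{(\ell)})$, then decomposing $\beta_k^{(\ell)}y/N \bmod 1$ into its CRT components produces frequencies $\gamma_{k,j}^{(\ell)} \in \bZ/p_j^{m_j}\bZ$ for which
\[
\1_{B_\ell}(c_\ell x^{i_\ell}) \;=\; \prod_{k=1}^d \1_{\bI_k^{(\ell)}}\!\Big( \sum_{j=1}^t \gamma_{k,j}^{(\ell)} c_\ell^{(j)} x_j^{i_\ell} / p_j^{m_j} \ \bmod 1 \Big).
\]
The crucial feature is the additive separation over the coordinates $x_1,\dots,x_t$: once $x_{j'}$ is fixed for every $j' \ne j$, the condition ``$c_\ell x^{i_\ell} \in B_\ell$'' becomes ``$c_\ell^{(j)} x_j^{i_\ell} \in B_\ell^{(j)}$'', where $B_\ell^{(j)}$ is the Bohr set in $\bZ/p_j^{m_j}\bZ$ with frequencies $\gamma_{k,j}^{(\ell)}$ and arcs $\bI_k^{(\ell)}$ shifted by the (fixed) contributions of the other coordinates; conditions with $\gamma_{k,j}^{(\ell)}\equiv 0$ are either vacuous or unsatisfiable, so $B_\ell^{(j)}$ is empty or a translated inhomogeneous Bohr set of rank at most $d$. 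Hence Proposition \ref{equidistribution prime power} applies to the sum over $x_j$.

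Accordingly, the first step is to sum over $x_1 \bmod p_1^{m_1}$ with $x_2,\dots,x_t$ fixed. Since $p_1 > i_r$ and each $c_\ell^{(1)}$ is a unit, Proposition \ref{equidistribution prime power} together with Lemma \ref{Fourier decay prime power} give
\[
\sum_{x_1 \bmod p_1^{m_1}} \prod_{\ell=1}^r \1_{B_\ell^{(1)}}(c_\ell^{(1)} x_1^{i_\ell}) \;=\; p_1^{-m_1(r-1)} \prod_{\ell=1}^r |B_\ell^{(1)}| \;+\; O_{i_r,d}\big( p_1^{m_1-1/i_r} \log^{dr} p_1^{m_1} \big).
\]
The key manoeuvre is now to rewrite each fibre cardinality as a fresh sum — legitimate precisely because of the additive separation —
\[
|B_\ell^{(1)}| \;=\; \sum_{z_\ell^{(1)} \bmod p_1^{m_1}} \1_{\phi B_\ell}\big( z_\ell^{(1)}, c_\ell^{(2)} x_2^{i_\ell}, \dots, c_\ell^{(t)} x_t^{i_\ell} \big),
\]
and then to sum the previous identity over $x_2,\dots,x_t$ and interchange orders of summation, which yields (writing $S$ for the left-hand side of the proposition)
\[
S \;=\; p_1^{-m_1(r-1)} \!\!\! \sum_{z_1^{(1)},\dots,z_r^{(1)} \bmod p_1^{m_1}} \ \sum_{x_2,\dots,x_t} \ \prod_{\ell=1}^r \1_{\phi B_\ell}\big( z_\ell^{(1)}, c_\ell^{(2)} x_2^{i_\ell},\dots, c_\ell^{(t)} x_t^{i_\ell} \big) \;+\; O_{i_r,d}\big( N p_1^{-1/i_r} \log^{dr} p_1^{m_1} \big),
\]
the error having absorbed the factor $N/p_1^{m_1}$ from the $x_2,\dots,x_t$ summations. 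This has exactly the shape we started with, except that the coordinate $x_1$ has been replaced by $r$ independent free variables $z_1^{(1)},\dots,z_r^{(1)}$, one for each value of $\ell$, which no longer occur inside any common monomial. One repeats this for $x_2,\dots,x_t$ in turn, at the $j$-th step applying Proposition \ref{equidistribution prime power} and Lemma \ref{Fourier decay prime power} over $\bZ/p_j^{m_j}\bZ$ and introducing fresh variables $z_1^{(j)},\dots,z_r^{(j)}$; a routine count (the prime-power error $O_{i_r,d}(p_j^{m_j-1/i_r}\log^{dr}p_j^{m_j})$ is incurred once for each of the $N/p_j^{m_j}$ remaining configurations) shows the error accrued at the $j$-th step is $O_{i_r,d}(N p_j^{-1/i_r}\log^{dr}p_j^{m_j})$.

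After all $t$ steps, the main term is
\[
\Big( \prod_{j=1}^t p_j^{-m_j(r-1)} \Big) \sum_{\{ z_\ell^{(j)} \}_{j\in[t],\,\ell\in[r]}} \prod_{\ell=1}^r \1_{\phi B_\ell}\big( z_\ell^{(1)},\dots,z_\ell^{(t)} \big) \;=\; N^{-(r-1)} \prod_{\ell=1}^r |B_\ell|,
\]
since $\prod_j p_j^{-m_j(r-1)} = N^{-(r-1)}$, since the variables attached to distinct values of $\ell$ are independent, and since summing $\1_{\phi B_\ell}$ over all of $\prod_j \bZ/p_j^{m_j}\bZ$ recovers $|\phi B_\ell| = |B_\ell|$. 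Combining this with the $t$ step errors gives the claimed estimate. The argument is essentially forced once one observes the additive separation and the fibre-rewriting trick; the only points requiring care are the verification that each step-$j$ condition on $x_j$ really is a (possibly degenerate) rank-$\le d$ Bohr condition modulo $p_j^{m_j}$ — immediate from the display in the first paragraph and the invertibility of $c_\ell^{(j)}$ — and the bookkeeping of the error terms. It is worth emphasising that arguing fibrewise in physical space is exactly what keeps the logarithmic loss localised, the $j$-th error term carrying only $\log^{dr}p_j^{m_j}$; Fourier-expanding the Bohr sets $B_\ell$ directly over $\bZ/N\bZ$ would instead contaminate every error term with $(\log N)^{dr}$, since an arc of $\bS^1$ resolved at scale $1/N$ has $L^1$ Fourier mass of order $\log N$. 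This is precisely the inefficiency of the global Fourier approach alluded to above.
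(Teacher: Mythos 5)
Your argument is correct and is essentially the paper's proof: both rest on the observation that CRT slices of a Bohr set are Bohr sets of the same rank, apply the prime-power case (Proposition \ref{equidistribution prime power}) fibrewise, and decouple the resulting fibre cardinalities into fresh independent variables so that the main term factors as $N^{-(r-1)}|B_1|\cdots|B_r|$. The only difference is presentational --- the paper peels off one prime at a time by induction on $t$, whereas you unroll that induction into an explicit $t$-step iteration --- and your error bookkeeping matches the paper's.
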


\begin{proof}
We will prove Proposition \ref{equidistribution mod N} by induction on the number of prime factors $t$. The base case $t=1$ is given by Proposition \ref{equidistribution prime power}.

When $t>1$, write $N=p_1^{m_1} N'$. Let $f_j= \1_{B_j}$ for $1\leq j\leq r$. We can regard $f_j$ as functions on $\bZ/p_1^{m_1}\bZ\times\bZ/N'\bZ$, by the Chinese remainder theorem. Note that
\begin{equation}
\label{eq: equidistribution 1}
\begin{split}
&\sum_{x\in \bZ/N\bZ}\1_{B_1}(c_1 x^{i_1})\cdots\1_{B_r}(c_r x^{i_r})\\
&=\sum_{z\in \bZ/N'\bZ} \ \sum_{y\in \bZ/p_1^{m_1}\bZ}f_1(c_1 y^{i_1},c_1 z^{i_1}) \cdots f_r(c_r y^{i_r},c_r z^{i_r}).
\end{split}   
\end{equation}

The key observation is that a slice of a Bohr set is still a Bohr set of the same rank, that is, we can regard $f_j(\cdot,v)$ as the characteristic function of a Bohr set in $\mathbb{Z}/p_1^{m_1}\bZ$, for $v\in \bZ/N'\bZ$ and $1\leq j\leq r$. To justify this observation, note that the canonical isomorphism $$\mathbb{Z}/p_1^{m_1}\bZ\times\mathbb{Z}/N'\bZ\simeq \mathbb{Z}/N\bZ$$ is given by $(u,v)\longmapsto uN'\Gamma+vp_1^{m_1}\gamma$, where $\Gamma$ is the multiplicative inverse of $N'$ in $\mathbb{Z}/p_1^{m_1}\bZ$ and $\gamma$ is the multiplicative inverse of $p_1^{m_1}$ in $\mathbb{Z}/N'\bZ$. Now, for any $\beta\in \bZ/N\bZ$ and interval $\bI$, we have
\begin{equation*}
\frac{\beta(uN'\Gamma+vp_1^{m_1}\gamma)}{N}\in \bI \iff \frac{\beta\Gamma u}{p_1^{m_1}}\in \bI-\frac{\beta v \gamma}{N'},
\end{equation*}
hence $f_j(\cdot,v)$ is the characteristic function of a Bohr set of the same rank as $B_j$ in $\mathbb{Z}/p_1^{m_1}\bZ$.

By Proposition \ref{equidistribution prime power}, the expression \eqref{eq: equidistribution 1} is equal to
\begin{align}
\notag
\sum_{z\in \bZ/N'\bZ} \Bigg(
p_1^{-m_1(r-1)} \prod_{1\leq j\leq r} \ \sum_{\fy_j\in \bZ/p_1^{m_1}\bZ}
&f_j(\fy_j,c_j z^{i_j}) \\
&+O_{i_r,d}(p_1^{m_1-1/i_r}\log^{dr}p_1^{m_1})
\Bigg).
\label{eq: equidistribution 2}
\end{align}
Summing over $\bZ/N'\bZ$, the error term in \eqref{eq: equidistribution 2} is $O_{i_r,d}(Np_1^{-1/i_r}\log^{dr}p_1^{m_1})$. For the main term, we apply the induction hypothesis for each $\fy_1, \ldots,\fy_r$ --- once again, we use the fact that the slicing of a Bohr set is still a Bohr set of the same rank --- to obtain
\begin{equation}
\label{eq: equidistribution 3}
\begin{split}
&p_1^{-m_1(r-1)}\sum_{\fy_1,...,\fy_r\in \bZ/p_1^{m_1}\bZ} \ \sum_{z\in \bZ/N'\bZ}f_1(\fy_1,c_1 z^{i_1}) \cdots f_r(\fy_r,c_r z^{i_r})\\
&=p_1^{-m_1(r-1)}\sum_{\fy_1, \ldots ,\fy_r\in \bZ/p_1^{m_1}\bZ}\Bigg((N')^{-(r-1)} \prod_{1\leq j\leq r} \ \sum_{\fz_j\in \bZ/N'\bZ} f_j(\fy_j,\fz_j)\\
&\qquad \qquad \qquad \qquad \qquad \qquad +O_{i_r,d}\Big(N'\sum_{2\leq j\leq t}p_j^{-1/i_r}\log^{dr}p_j^{m_j}\Big)\Bigg).
\end{split}
\end{equation}
Summing over $\fy_1,\ldots,\fy_r$ and multiplying by $p_1^{-m_1(r-1)}$, the error term in \eqref{eq: equidistribution 3} is $O_{i_r,d}(N\sum_{2\leq j\leq t}p_j^{-1/i_r}\log^{dr}p_j^{m_j})$, while the main term is
\begin{align*}
&N^{-(r-1)}
\sum_{\fy_1,...,\fy_r
\in \bZ/p_1^{m_1}\bZ}  \ \sum_{\fz_1,\ldots,\fz_r \in \bZ/N'\bZ} f_1(\fy_1,\fz_1) \cdots f_r(\fy_r,\fz_r)\\
&=N^{-(r-1)}|B_1|\cdots|B_r|. \qedhere
    \end{align*}
\end{proof}

Since wrappers are disjoint unions of Bohr sets, we can write the characteristic functions of wrappers as sums of characteristic functions of Bohr sets to obtain the following corollary. 

\begin{corollary}
\label{equidistribution wrappers}
Let $i_1< \cdots <i_r$ and $D$ be positive integers. Let $$N=p_1^{m_1} \cdots p_t^{m_t} \quad \text{with} \quad p_1,\ldots,p_t>i_r,$$
and let $W_j$ be a $(\tau_j,d_j)$-wrapper in $\bZ/N\bZ$, with $\tau_j\geq D^{-1}$  and $d_j\leq D$ for $1\leq j\leq r$. Let $c_1, \ldots, c_r \in (\bZ/N\bZ)^{\times}$. Then
\begin{align*}
\sum_{x\in \bZ/N\bZ} \1_{W_1}(c_1 x^{i_1}) \cdots \1_{W_r}(c_r x^{i_r}) &= |W_1| \cdots |W_r| N^{-(r-1)} \\ &\qquad +O_{i_r,D}\left(N\sum_{1\leq j\leq t}\frac{\log^{Dr}p_j^{m_j}}{p_j^{1/i_r}}\right).
\end{align*}
\end{corollary}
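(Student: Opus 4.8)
The plan is to deduce the statement for wrappers from Proposition~\ref{equidistribution mod N} for Bohr sets, by expanding each $\1_{W_j}$ into a bounded sum of indicator functions of Bohr sets and applying that proposition termwise. Recall from the definitions around~\eqref{part1} and~\eqref{Bohr} that a $(\tau_j,d_j)$-wrapper $W_j$ is a union $W_j = \bigcup_{\bv\in X_j} B_{\bv}$ of inhomogeneous Bohr sets of rank $d_j$, indexed by some $X_j\subseteq [\rho_j]^{d_j}$, where $\rho_j$ is the number of arcs in the underlying partition of $\mathbb{S}^1$; since that partition consists of $\rho_j-1$ arcs of equal measure $\tau_j$ together with one further arc of smaller measure, $\rho_j\ll \tau_j^{-1}\le D$. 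Crucially, the constituent Bohr sets are pairwise disjoint, as they are cut out by membership in the distinct parts of a fixed partition. Transferring to the $\bZ/N\bZ$ language of this section, each $B_{\bv}$ is an inhomogeneous Bohr set of rank $d_j\le D$, and $|X_j|\le \rho_j^{d_j}\ll_D 1$.

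First I would multiply out
\[
\prod_{j=1}^r \1_{W_j}(c_j x^{i_j}) \;=\; \sum_{\bv_1\in X_1}\cdots\sum_{\bv_r\in X_r}\ \prod_{j=1}^r \1_{B_{\bv_j}}(c_j x^{i_j}),
\]
a sum of $\prod_j|X_j|\ll_D 1$ many terms. Summing over $x\in\bZ/N\bZ$ and applying Proposition~\ref{equidistribution mod N} to each inner sum --- after first viewing $B_{\bv_j}$ as a rank-$D$ Bohr set by adjoining $D-d_j$ dummy frequencies with full interval, which enlarges the implied constant to $O_{i_r,D}(\cdot)$ but leaves the set unchanged --- I obtain that each term equals $N^{-(r-1)}\prod_{j=1}^r |B_{\bv_j}|$ plus an error $O_{i_r,D}\big(N\sum_{1\le j\le t} p_j^{-1/i_r}\log^{Dr}p_j^{m_j}\big)$. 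Summing over the $O_D(1)$ choices of $(\bv_1,\dots,\bv_r)$, the aggregate error is still of this size, and by disjointness the aggregate main term factors as
\[
N^{-(r-1)}\prod_{j=1}^r\Big(\sum_{\bv_j\in X_j}|B_{\bv_j}|\Big)\;=\;N^{-(r-1)}\,|W_1|\cdots|W_r|,
\]
which is exactly the claimed formula.

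This is essentially bookkeeping rather than a substantial argument; the only points demanding a little care are the pairwise disjointness of the Bohr sets making up a wrapper (so that the main terms recombine cleanly into $|W_1|\cdots|W_r|$) and the uniformity of all implied constants in $D$, which is guaranteed because $\tau_j\ge D^{-1}$ and $d_j\le D$ bound simultaneously the number of Bohr sets in each wrapper and their ranks. I do not foresee a genuine obstacle.
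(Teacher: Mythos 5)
Your proposal is correct and follows exactly the route the paper takes: the paper's entire proof is the one-line observation that a wrapper is a disjoint union of boundedly many inhomogeneous Bohr sets, so one expands the indicators and applies Proposition~\ref{equidistribution mod N} termwise. Your additional bookkeeping (the bound $|X_j|\ll_D 1$ from $\tau_j\ge D^{-1}$ and $d_j\le D$, the disjointness recombining the main terms into $|W_1|\cdots|W_r|$, and padding ranks up to $D$) just makes explicit what the paper leaves implicit.
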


\subsection{Dense Fermat over finite, cyclic rings}

We now establish Theorem~\ref{thm2}. 
Recall that $\eps$ is sufficiently small depending on $s,i_r,\kappa,\Omega$, and that $R$ is sufficiently large depending on $s,i_r,\kappa,\Omega$. Over the course of the proof, we will explain how small $\eps$ has to be and how large $R$ has to be, albeit implicitly.
We will work in $\bZ/N\bZ$, where 
$N=p_1^{m_1} \cdots p_t^{m_t}$ with \eqref{rough}. 
Without loss of generality, we may assume that $A\subseteq(\bZ/N\bZ)^{\times}$, at the cost of changing \eqref{density assumption mod N} to
\begin{equation}
\label{eq: dense fermat 0}
|A|\geq\left(\prod_{1\leq j\leq r}\frac{1}{rk_j}+\frac{9}{10}\kappa\right)N,
\end{equation}
since there are at most $O_{\Omega}(N/R)$ many elements that are not invertible. 

Assume for a contradiction that there are fewer than $\eps N^{s-1}$ many solutions $\bx \in A^s$ to 
\eqref{MainEq}.
Define 
\[
A_n = \{ c_n a^{i_j} : a \in A\}
\]
for all $n \in I_j$ and all $1 \leq j \leq r$. Note that $|A_n| \geq |A|/i_r^{\Omega} \gg_{i_r, s,\Omega} N$ since $A\subseteq(\bZ/N\bZ)^{\times}$. We may now apply Theorem \ref{sem} to find --- for every $1 \leq i \leq s$ 
--- a $(\tau_i, d_i)$-wrapper $W_i$ with $\tau_i \gg_{\eps, s, i_r, \Omega} 1$ and $d_i \ll_{\eps, s, i_r, \Omega} 1$ such that 
\[
|A_i \setminus W_i| \ll \eps^{1/(2s)}N
\]
and 
\begin{equation} \label{eq: dense fermat 1}
\sum_{w_1 \in W_1, \dots, w_s \in W_s} \1_{w_1 + \dots + w_s = b} \ll_{s, i_r, \Omega} \eps^{1/2} N^{s-1},
\end{equation}
for some $b \in \bZ/N\bZ$. 

We now claim that 
\begin{equation} \label{eq: dense fermat 2}
|W_1| + \dots + |W_s| < \left(1 + \frac{\kappa}{10}\right)N. 
\end{equation}
We prove this claim via contradiction, so let us assume that it does not hold. Let $C$ be a large, positive constant. As $\eps$ is sufficiently small, we have 
\[ 
|W_i| \geq |A_i| - C\eps^{1/(2s)} N \gg_{i_r,s,\Omega} N  
\]
for all $1 \leq i \leq s$. We may now apply Lemma \ref{PCD}, giving
\[ 
\sum_{w_1 \in W_1, \dots, w_s \in W_s} \1_{w_1 + \dots + w_s} \gg_{s, i_r, \kappa, \Omega} N^{s-1}. 
\]
As $\eps$ is sufficiently small, this contradicts \eqref{eq: dense fermat 1}. 

We may therefore assume that \eqref{eq: dense fermat 2} holds. For every $\bn \in I_1 \times \dots \times I_r$, let 
\[ 
X_{\bn} =\{ x \in \bZ/N\bZ : c_{n_j} x^{i_j} \in W_{n_j} \;(1 \leq j \leq r) \} . 
\]
By Corollary \ref{equidistribution wrappers},
\begin{align}
\notag
|X_{\bn}| &=|W_{n_1}| \cdots |W_{n_r}| N^{-(r-1)} \\ & \qquad +O_{\eps, s,i_r,\Omega}\left(N\sum_{1\leq j\leq t}p_j^{-1/i_r}\log^{O_{\eps,s,i_r,\Omega}(1)}p_j\right).
\label{eq: dense fermat 3}
\end{align}
Since $|A_i \setminus W_i| \ll \eps^{1/(2s)} N$ for all $1 \leq i \leq s$,
\[ |A| \leq |X_{\bn}|  + O_{s, i_r, \Omega}(\eps^{1/(2s)} N) .\]
Coupling this with the preceding bound, and then averaging over all $\bn \in I_1 \times \dots \times I_r$, gives
\begin{align} \label{eq: dense fermat 4}
\frac{|A|}{N} & \leq \frac{1}{k_1 \cdots k_r} \sum_{\bn \in I_1 \times \dots\times I_r} \frac{|W_{n_1}| \cdots |W_{n_r}| }{N^{r}} \nonumber \\
& \quad + O_{s, i_r, \Omega}(\eps^{1/(2s)}) + O_{\eps, s,i_r,\Omega}\left(\sum_{1\leq j\leq t}p_j^{-1/i_r}\log^{O_{\eps,s,i_r,\Omega}(1)}p_j\right). 
\end{align}

Clearly, we may assume that $\kap$ is sufficiently small in terms of $r$. Also, like in the previous section, we may assume that $\eps$ is a function of $s, i_r, \kap, \Omega$, so that the dependence of the implied constants on $\eps$ can be subsumed into their dependence on $s, i_r, \kap, \Omega$.

By the AM--GM inequality and \eqref{eq: dense fermat 2}, the first term on the right-hand side of \eqref{eq: dense fermat 4}
is bounded above by
\begin{align*}
\frac{1}{ N^r  k_1 \cdots k_r} \left(  r^{-1} \sum_{j=1}^r \sum_{n_j \in I_j} |W_{n_j}| \right)^r & \leq \frac{(1 + \kappa/10)^r}{k_1 \cdots k_r r^r} \\
& \leq \frac{1 +r\kappa/5}{k_1 \cdots k_r r^r} \le  \prod_{1 \le j \le r} \frac1{rk_j} +  \frac{\kappa}{5}.
\end{align*}
Now choose $\eps$ small enough to ensure that the first error term on the right-hand side of \eqref{eq: dense fermat 4} is less than $\kappa/5$, and then choose $R$ large enough to ensure that the second error term on the right-hand side of \eqref{eq: dense fermat 4} is less than $\kappa/5$. Thus,
\[
|A| < \left( \prod_{1 \le j \le r} \frac1{rk_j} +  \frac{3\kappa}{5} \right)N,
\]
contradicting the hypothesis \eqref{eq: dense fermat 0}. Hence, we must have at least $\eps N^{s-1}$ many solutions to \eqref{MainEq} with $x_1, \dots, x_s \in A$.

\subsection*{Funding}
AM is supported by Leverhulme Early Career Fellowship \texttt{ECF-2025-148}. 

\subsection*{Acknowledgements}
ZLL would like to thank his advisor Terence Tao for his guidance and support.

\subsection*{Rights}

For the purpose of open access, 
the authors have applied a Creative Commons Attribution 
(CC-BY) licence to any 
Author Accepted Manuscript version arising 
from this submission.


\end{document}